    \theoremstyle{plain}
\newtheorem{theorem}{Theorem}[section]
\newtheorem{proposition}{Proposition}[section]
\newtheorem{corollary}{Corollary}[section]
\newtheorem{lemma}{Lemma}[section]
\theoremstyle{remark}
\newtheorem{remark}{Remark}[section]
\newtheorem{examples}{Examples}[section]
\numberwithin{equation}{section}
\DeclareMathOperator{\supp}{supp}
\DeclareMathOperator{\im}{Im}
\begin{document}

\title[Laplacians on partially and generalized hyperbolic attractors]{Self-adjoint Laplacians on partially and generalized hyperbolic attractors}

\author{Shayan Alikhanloo$^1$, Michael Hinz$^2$}
\thanks{$^1$, $^2$ Research supported by the DFG IRTG 2235: 'Searching for the regular in the irregular: Analysis of singular and random systems'.}
\address{$^1$Fakult\"at f\"ur Mathematik, Universit\"at Bielefeld, Postfach 100131, 33501 Bielefeld, Germany}
\email{salikhan@math.uni-bielefeld.de}
\address{$^2$Fakult\"at f\"ur Mathematik, Universit\"at Bielefeld, Postfach 100131, 33501 Bielefeld, Germany}
\email{mhinz@math.uni-bielefeld.de}

\begin{abstract}
We construct self-adjoint Laplacians and symmetric Markov semigroups on partially hyperbolic attractors and on hyperbolic attractors with singularities, endowed with Gibbs u-measures. If the measure has full support, we can also guarantee the existence of an associated symmetric Hunt diffusion process. In the special case of partially hyperbolic diffeomorphisms induced by geodesic flows on manifolds of negative sectional curvature the Laplacians we consider are self-adjoint extensions of well-known classical leafwise Laplacians. 
\tableofcontents
\end{abstract}

\keywords{Hyperbolic attractors, Gibbs u-measures, Dirichlet forms, self-adjoint operators, semigroups, diffusions}
\subjclass[2010]{31C25, 37D10, 37D30, 37D35, 37D40, 37D45, 47A07, 47B25, 47D07, 60J60}

\maketitle

\section{Introduction}

We construct self-adjoint Laplacians on partially hyperbolic attractors, \cite{BrinPesin, BDPP08, HaPe, Pesin, PesinSinai, FAR07}, \cite[Section 5]{CLP17}, and on hyperbolic attractors with singularities, \cite{KS86, L90, Pesin92, Sataev92, Schmeling98}, \cite[Section 8]{CLP17}, endowed with Gibbs u-measures, \cite{CLP17, Pesin92, PesinSinai}. In \cite{AH20} we had already studied self-adjoint Laplacians on uniformly hyperbolic attractors, endowed with SRB-measures, and the present article may be viewed as a continuation of this research. 

Here we have two principal goals: The first is to define such Laplacians in situations that are more general than uniform hyperbolicity, but in which the same method can still be applied rather easily. This allows to extend our analysis to many prominent classes of examples, such as geodesic flows on negatively curved manifolds, \cite{Anosov67, BarreiraPesin, BrinStuck, BG05, EinsiedlerWard, FH19, H17, KatokHasselblatt, Poll93}, or attractors of Lorenz, Lozi or Belykh type, \cite{Belykh, Levy, Lozi, Misiurewicz, Pesin92, Sataev99} or
\cite[Section 8]{CLP17}. The second principal goal is to point out that in the special case of geodesic flows our construction recovers an analysis that had already been established a long time ago, see for instance \cite{Yue, Yue95}. The extension of this kind of analysis to the attractors of dissipative hyperbolic dynamical systems, started in \cite{AH20} and continued here, is new. Since we wish to construct Laplacians self-adjoint with respect to Gibbs u-measures, they must locally be superpositions of symmetric Laplacians on local unstable manifolds, endowed with the conditional measures. The main difference to the geodesic flow case is that in general the conditional densities of the Gibbs u-measure may only be H\"older continuous, and this is not sufficient to introduce a 'classical' leafwise Laplacian on functions that are $C^2$ in the unstable directions. We view the situation from a quadratic forms perspective, \cite{AR90, BaGL, BH91, Da89, FOT94}, as it is common in mathematical physics, \cite[Section VIII.6]{RS80}, and partial differential equations, \cite[Chapter 6]{Evans}. Based on the stable manifold theorem and the notion of Gibbs u-measure, it is not difficult to construct 'natural' Dirichlet forms, and we regard the unique self-adjoint operators associated with such forms as 'natural' self-adjoint Laplacians on the attractor. The existence of a measure with suitable properties is a main ingredient, see \cite{ARHTT} for a related study.

In \cite{AH20} we had already used the same approach to construct self-adjoint Laplacians on uniformly hyperbolic attractors. Here we distill a simplified abstract version of the basic argument, and this simplification allows to easily apply it to partially hyperbolic attractors and to attractors with singularities. Because compared to \cite{AH20} subtle details need to be changed or added (such as the definition of rectangles or an additional approximation step), we provide a proof for the main argument, Lemma \ref{L:closable}. The fact that the present analysis may be seen as a generalization of known results for geodesic flows - which are examples of partially hyperbolic systems - had not been discussed in \cite{AH20}.

We proceed as follows. In Section \ref{S:Dirichlet} we introduce a general abstract setup and prove our main results, Theorem \ref{T:closable} and Corollary \ref{C:semigroup}, on the existence of self-adjoint Laplacians and symmetric Markov semigroups. 
Section \ref{S:regularity} discusses the case when the Gibbs u-measure has full support, in this case we can invoke the theory of regular Dirichlet forms, \cite{FOT94}, and guarantee the existence of an associated symmetric Hunt diffusion process, Theorem \ref{T:process}. In Section \ref{S:PH} we recall basic definitions on partially hyperbolic maps and attractors and well-known results on the existence of Gibbs u-measures. We then observe that Theorem \ref{T:closable}, Corollary \ref{C:semigroup} and, in some cases, also Theorem \ref{T:process} apply to  partially hyperbolic attractors and yield self-adjoint Laplacians, symmetric semigroups and, in some cases, diffusion processes, Corollary \ref{C:mainresults}. 
In Section \ref{S:geodesicflow} we put special emphasis on partially hyperbolic diffeomorphisms induced by geodesic flows, because, as mentioned, this allows to explain that the Laplacians constructed here generalize formerly known special cases, Remarks \ref{R:orientation1} and \ref{R:orientation2}. Section \ref{S:GH} contains a discussion of hyperbolic attractors with singularities and Gibbs u-measures on them and the observation that the results from Section \ref{S:Dirichlet} also apply to these cases, Corollary \ref{C:mainresultsGH}. This discussion of hyperbolic attractors with singularities motivates the notation we employ in Sections \ref{S:Dirichlet} and \ref{S:regularity}.

\section{Dirichlet forms and self-adjoint Laplacians}\label{S:Dirichlet}

Let $M$ be a smooth Riemannian manifold, $U\subseteq M$ a relatively compact open subset and $r\geq 1$. We assume that there is a sequence $D_1^-\subseteq D_2^-\subseteq \dots$ of compact subsets $D_\ell^-$, all contained in $\overline{U}$, and that for each point $z$ in the union
\begin{equation}\label{E:Dminus}
D^-=\bigcup_{\ell\geq 1} D_\ell^-
\end{equation}
there is an immersed submanifold $W(z)$ of $U$ of class $C^r$ that contains $z$. We assume further that for any $\ell\geq 1$ the set $D_\ell^-$ can be covered by finitely many subsets $\mathcal{R}_{\ell,i}\subseteq D_\ell^-$, $i=1,...,n_\ell$, each of which admits a partition $\mathcal{P}_{\ell,i}$ into open subsets $B$ of the submanifolds $W(z)$. As usual we refer to these sets $\mathcal{R}_{\ell,i}$ as \emph{rectangles}. We finally assume that $\mu$ is a Borel probability measure on $D^-$ such that  on each $\mathcal{R}_{\ell,i}$ having positive measure $\mu(\mathcal{R}_{\ell,i})>0$ the disintegration identity
\begin{equation}\label{E:disintegration}
\mu(E)=\int_{\mathcal{P}_{\ell,i}} \, \mu_{B}(E)\, \hat{\mu}_{\mathcal{P}_{\ell,i}}(dB), \quad \text{$E\subseteq \mathcal{R}_{\ell,i}$ Borel,}
\end{equation}
holds, where $\hat{\mu}_{\mathcal{P}_{\ell,i}}$ is the pushforward of $\mu$ under the canonical projection onto the elements $B$ of the partition $\mathcal{P}_{\ell,i}$ and for each $B\in \mathcal{P}_{\ell,i}$ the symbol $\mu_B$ denotes the conditional measure on $B$. See for instance \cite{Rokhlin} or \cite{Viana16}.

Now let each of the immersed submanifolds $W(z)$ be endowed with the Riemannian metric inherited from $M$ and let $m_{W(z)}$ denote the corresponding Riemannian volume on $W(z)$. For $\ell$, $i$ and $B\in \mathcal{P}_{\ell,i}$ let $m_{B}$ denote the restriction to $B$ of $m_{W(z)}$. We say that $\mu$ satisfies the\emph{ (AC)-property} if for any rectangle $\mathcal{R}_{\ell,i}$ of positive measure the conditional measures $\mu_{B}$ are absolutely continuous with respect to $m_{B}$ for $\hat{\mu}_{\mathcal{P}_{\ell,i}}$-a.e. $B$. We write $\mathcal{M}^\text{ac}_{bd}$ for the set of all Borel probability measures on $D^-$ with the (AC)-property and with Radon-Nikodym densities $d\mu_B/dm_B$ that are uniformly bounded and uniformly bounded away from zero. 

By $C(D^-)$ we denote the space of continuous functions on $D^-$. For $1\leq k\leq r$ we write $C^k(M)|_{D^-}$ for the space of retrictions to $D^-$ of functions from $C^k(M)$, clearly a dense subspace of $C(D^-)$ and $C^1(M)|_{D^-}$. We write $C^u(D^-)$ (resp. $C^{u,k}(D^-)$) for the space of Borel functions $\varphi:D^-\rightarrow \mathbb{R}$ whose restriction to any immersed submanifold $W(z)$, $z\in D^-$, is a continuous (resp. $C^k$-) function on $W(z)$. It is easily seen that $C(D^-)\subset C^{u}(D^-)$ and that $C^k(M)|_{D^-}\subset C^{u,k}(D^-)$ for any $1\leq k\leq r$, \cite[Theorem 5.27]{Lee}. The following facts are straightforward, see \cite[Propositions 4.2 and 5.1]{AH20} for proofs.

\begin{proposition}\label{P:dense}\mbox{}
\begin{enumerate}
\item[(i)] For any function $g\in C^1(M)$ and any $x\in D^-$ we have
\begin{equation*}\label{E:gradientineq}
\left\|\nabla_{W(x)} g|_{D^-}(x)\right\|_{T_xW(x)}\leq \left\|\nabla_M g(x)\right\|_{T_xM}.
\end{equation*}
\item[(ii)] For any finite Borel measure $\mu$ on $D^-$ and any $1\leq k\leq r$ the space $C^k(M)|_{D^-}$ is a dense subspace of $L^2(D^-,\mu)$.
\end{enumerate}
\end{proposition}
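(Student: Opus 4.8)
\emph{Proof plan.} For part (i) the plan is to identify the intrinsic gradient along the leaf with an orthogonal projection. Fix $x\in D^-$ and let $\iota\colon W(x)\to M$ denote the immersion. Since $W(x)$ carries the metric inherited from $M$, the differential $d\iota_x\colon T_xW(x)\to T_xM$ is a linear isometry onto its image, and I will henceforth identify $T_xW(x)$ with the subspace $d\iota_x\big(T_xW(x)\big)\subseteq T_xM$. For any $v\in T_xW(x)$ the chain rule gives
\begin{equation*}
d\big(g\circ\iota\big)_x(v)=dg_x(d\iota_x v)=\big\langle \nabla_M g(x),\,d\iota_x v\big\rangle_{T_xM},
\end{equation*}
while by definition of the leafwise gradient $\big\langle \nabla_{W(x)}(g\circ\iota)(x),\,v\big\rangle_{T_xW(x)}=d(g\circ\iota)_x(v)$. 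Because the metric on $W(x)$ is the restriction of that on $M$, these two identities show that (the image under $d\iota_x$ of) $\nabla_{W(x)}(g\circ\iota)(x)$ is exactly the orthogonal projection of $\nabla_M g(x)$ onto the subspace $T_xW(x)$. An orthogonal projection is norm-nonincreasing, and $d\iota_x$ is an isometry, so $\big\|\nabla_{W(x)} g|_{D^-}(x)\big\|_{T_xW(x)}\leq \big\|\nabla_M g(x)\big\|_{T_xM}$, as claimed. The only point to be careful about here is the immersed (rather than embedded) character of $W(x)$, but since only the pointwise tangent data at $x$ enter, working with $d\iota_x$ is enough.

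For part (ii) I would argue by a standard two-step approximation, viewing $\mu$ as a finite Borel measure on the compact metric space $\overline{U}$ that is carried by $D^-$, so that $L^2(D^-,\mu)\cong L^2(\overline{U},\mu)$. First, such a measure is automatically Radon, so continuous functions are dense in $L^2(\overline{U},\mu)$: one approximates indicators of Borel sets in $L^2$ using inner and outer regularity together with Urysohn's lemma (equivalently, via Lusin's theorem). By the Tietze extension theorem $C(\overline{U})=C(M)|_{\overline{U}}$, since $\overline{U}$ is closed in $M$. Second, on the compact set $\overline{U}$ every function in $C(M)$ can be approximated uniformly by functions in $C^k(M)$ — for instance through a partition of unity and local mollification, or by the Whitney approximation theorem — and uniform approximation on $\overline{U}$ upgrades to $L^2(\mu)$ approximation because
\begin{equation*}
\|\varphi\|_{L^2(D^-,\mu)}\leq \mu(D^-)^{1/2}\,\|\varphi\|_{\infty,\overline{U}}
\end{equation*}
and $\mu$ is finite. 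Concatenating the two steps yields that $C^k(M)|_{D^-}$ is dense in $L^2(D^-,\mu)$.

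Both parts are essentially routine, and the mild obstacle is purely bookkeeping: in (i) one must correctly pass through the immersion to realize the leafwise gradient as a projection, and in (ii) one must invoke the regularity of $\mu$ to secure density of continuous functions before smoothing. Neither step is delicate given that $\mu$ is a finite Borel measure and $M$ is a smooth manifold, which is why these facts are recorded as straightforward; detailed proofs appear in \cite[Propositions 4.2 and 5.1]{AH20}.
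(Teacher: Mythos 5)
Your proof is correct and follows essentially the same route as the proofs the paper cites from \cite[Propositions 4.2 and 5.1]{AH20}: in (i) the leafwise gradient is realized, via the isometric immersion, as the orthogonal projection of $\nabla_M g(x)$ onto $T_xW(x)$, and in (ii) one combines regularity of the finite Borel measure (Lusin/Urysohn plus Tietze) with uniform smooth approximation on the compact set $\overline{U}$, using finiteness of $\mu$ to pass from uniform to $L^2$ closeness. Your handling of the two mild subtleties (the immersed rather than embedded leaves, and the fact that $D^-$ is merely $\sigma$-compact so $\mu$ must be viewed as a Borel measure carried by $D^-$ inside the compact space $\overline{U}$) is sound.
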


By a \emph{quadratic form} on $L^2(D^-,\mu)$ we mean a densely defined nonnegative definite symmetric bilinear form on $L^2(D^-,\mu)$, i.e. a pair $(\mathcal{E},\mathcal{D}(\mathcal{E}))$, where $\mathcal{D}(\mathcal{E})$ is a dense subspace of $L^2(D^-,\mu)$ and $\mathcal{E}$ is a nonnegative definite and symmetric bilinear form on $\mathcal{D}(\mathcal{E})$. We employ the notation $\mathcal{E}(\varphi):=\mathcal{E}(\varphi,\varphi)$. A quadratic form $(\mathcal{E},\mathcal{D}(\mathcal{E}))$ is said to be \emph{closed}, \cite[Section VIII.6]{RS80}, if $\mathcal{D}(\mathcal{E})$ is a Hilbert space with respect to the norm
\[\Vert \varphi\Vert_{\mathcal{D}(\mathcal{E})}:=\sqrt{\mathcal{E}(\varphi)+\Vert \varphi\Vert_{L^2(D^-,\mu)}^2}, \quad  \varphi\in \mathcal{D}(\mathcal{E}).\] 
A quadratic form $(\mathcal{E},\mathcal{D}(\mathcal{E}))$ is said to be \emph{closable} if it possesses a closed \emph{extension}, i.e. there is a closed form $(\mathcal{E}',\mathcal{D}(\mathcal{E}'))$ such that $\mathcal{D}(\mathcal{E})\subseteq \mathcal{D}(\mathcal{E}')$ and $\mathcal{E}=\mathcal{E}'$ on $\mathcal{D}(\mathcal{E})$. The smallest closed extension of a closable form is referred to as its \emph{closure}.

A closed quadratic form $(\mathcal{E},\mathcal{D}(\mathcal{E}))$ on $L^2(D^-,\mu)$ is called a \emph{Dirichlet form} if 
$\varphi\wedge 1 \in \mathcal{D}(\mathcal{E})$ for any $\varphi\in\mathcal{D}(\mathcal{E})$ and $\mathcal{E}(\varphi\wedge 1)\leq \mathcal{E}(\varphi)$, see \cite[Chapter I, 1.1.1 and 3.3.1]{BH91}, \cite[Chapter 1]{FOT94} or \cite{BaGL, Da89}. A Dirichlet form $(\mathcal{E},\mathcal{D}(\mathcal{E}))$ is called \emph{conservative} if $1\in\mathcal{D}(\mathcal{E})$ and $\mathcal{E}(1)=0$, \cite[p. 49]{FOT94}. A Dirichlet form $(\mathcal{E},\mathcal{D}(\mathcal{E}))$ with $1\in\mathcal{D}(\mathcal{E})$ is called \emph{local} if $\mathcal{E}(F(\varphi),G(\varphi))=0$ for any $F,G\in C_c^\infty(\mathbb{R})$ with disjoint supports and any $\varphi\in\mathcal{D}(\mathcal{E})$, \cite[Chapter I, Corollary 5.1.4]{BH91}.

Given $\varphi\in C^{u,1}(D^-)$ we write
\begin{equation}\label{E:gradient}
\nabla \varphi(z):=\nabla_{W(z)}(\varphi|_{W(z)})(z),\quad z\in D^-,
\end{equation}
to define the gradient operator $\nabla$ on $C^{u,1}(D^-)$. Let 
\begin{equation}\label{E:initialdomain}
\overline{\mathcal{D}}_0(\mathcal{E}^{(\mu)}):=\left\lbrace \varphi\in L^2(D^-,\mu)\cap C^{u,1}(D^-):\ \int_{D^-}\left\|\nabla \varphi(z)\right\|_{T_zW(z)}^2\:\mu(dz)<+\infty\right\rbrace
\end{equation}
and 
\begin{equation}\label{E:defquadform}
\mathcal{E}^{(\mu)}(\varphi):=\int_{D^-} \left\|\nabla\varphi (z)\right\|_{T_zW(z)}^2\:\mu(dz),\quad \varphi\in \overline{\mathcal{D}}_0(\mathcal{E}^{(\mu)}).
\end{equation}
We also use the notation 
\[\mathcal{D}_0(\mathcal{E}^{(\mu)}):=\overline{\mathcal{D}}_0(\mathcal{E}^{(\mu)})\cap C(D^-).\]
The following is our main existence result for self-adjoint Laplacians.

\begin{theorem}\label{T:closable}
Suppose that $\mu\in \mathcal{M}^\text{ac}_{bd}$.
\begin{enumerate}
\item[(i)] The quadratic form $(\mathcal{E}^{(\mu)},\overline{\mathcal{D}}_0(\mathcal{E}^{(\mu)}))$ on $L^2(D^-,\mu)$ is closable, and its closure $(\mathcal{E}^{(\mu)}, \overline{\mathcal{D}}(\mathcal{E}^{(\mu)}))$ is a local conservative Dirichlet form. 
\item[(ii)] There exists a unique non-positive definite self-adjoint operator $(\overline{\mathcal{L}}^{(\mu)},\mathcal{D}(\overline{\mathcal{L}}^{(\mu)}))$ on $L^2(D^-,\mu)$ such that 
\begin{equation*}\label{E:GaussGreen}
\int_{D^-} \overline{\mathcal{L}}^{(\mu)}u\:\varphi\:d\mu=-\mathcal{E}^{(\mu)}(u,\varphi)
\end{equation*}
for all $u\in \mathcal{D}(\overline{\mathcal{L}}^{(\mu)})$ and $\varphi\in \overline{\mathcal{D}}(\mathcal{E}^{(\mu)})$. In particular, we have 
\begin{equation}\label{E:harmonic}
\int_{D^-} \overline{\mathcal{L}}^{(\mu)}u\:d\mu=0,\quad u\in \mathcal{D}(\overline{\mathcal{L}}^{(\mu)}).
\end{equation}
\item[(iii)] Corresponding statements are true for the quadratic form $(\mathcal{E}^{(\mu)},\mathcal{D}_0(\mathcal{E}^{(\mu)}))$, its closure $(\mathcal{E}^{(\mu)}, \mathcal{D}(\mathcal{E}^{(\mu)}))$ and the generator $(\mathcal{L}^{(\mu)},\mathcal{D}(\mathcal{L}^{(\mu)}))$ of the latter. The Dirichlet form $(\mathcal{E}^{(\mu)}, \overline{\mathcal{D}}(\mathcal{E}^{(\mu)}))$ is an extension of $(\mathcal{E}^{(\mu)}, \mathcal{D}(\mathcal{E}^{(\mu)}))$.
\end{enumerate}
\end{theorem}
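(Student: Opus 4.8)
The plan is to treat closability of $(\mathcal{E}^{(\mu)},\overline{\mathcal D}_0(\mathcal{E}^{(\mu)}))$ as the one genuinely analytic point and to obtain all remaining assertions from the standard theory of closed symmetric forms. For closability I would argue leafwise and lift the conclusion through the disintegration \eqref{E:disintegration}. Suppose $(\varphi_n)\subseteq\overline{\mathcal D}_0(\mathcal{E}^{(\mu)})$ satisfies $\varphi_n\to0$ in $L^2(D^-,\mu)$ while the leafwise gradients $\nabla\varphi_n$ form a Cauchy sequence in $L^2(D^-,\mu)$ (as sections of the leafwise tangent bundle), say $\nabla\varphi_n\to g$; the goal is $g=0$ $\mu$-a.e. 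Because $D^-=\bigcup_\ell D_\ell^-$ and each $D_\ell^-$ is covered by the finitely many rectangles $\mathcal{R}_{\ell,i}$, it suffices to show $g=0$ $\mu$-a.e.\ on every rectangle of positive measure.

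Fix such a rectangle $\mathcal{R}=\mathcal{R}_{\ell,i}$ with partition $\mathcal{P}=\mathcal{P}_{\ell,i}$. Writing the integrals over $\mathcal{R}$ as iterated integrals over the leaves $B\in\mathcal{P}$ via \eqref{E:disintegration} and passing to a subsequence, I obtain for $\hat\mu_{\mathcal{P}}$-a.e.\ $B$ simultaneously $\varphi_n|_B\to0$ in $L^2(B,\mu_B)$ and $\nabla_B\varphi_n|_B\to g|_B$ in $L^2(B,\mu_B)$. The decisive per-leaf fact is that the classical Dirichlet energy $\psi\mapsto\int_B\|\nabla_B\psi\|^2\,dm_B$ on the open $C^r$-submanifold $B$ is closable on $L^2(B,m_B)$; this is just well-definedness of the weak leafwise gradient and follows by pairing $\nabla_B\varphi_n$ against compactly supported test vector fields on $B$. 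Since $\mu\in\mathcal{M}^\text{ac}_{bd}$, the density $d\mu_B/dm_B$ is bounded above and below, so the $L^2(B,\mu_B)$- and $L^2(B,m_B)$-norms of functions and of gradients are equivalent and closability is inherited by the $\mu_B$-form. Hence $g|_B=0$ $\mu_B$-a.e.\ for $\hat\mu_{\mathcal{P}}$-a.e.\ $B$, and integrating back up gives $g=0$ $\mu$-a.e.\ on $\mathcal{R}$. This is exactly the statement I would record separately as Lemma \ref{L:closable}, and it is where the main obstacle lies: making the Fubini-type reduction rigorous (the simultaneous extraction of a single subsequence, and the fact that the $\mathcal{R}_{\ell,i}$ only cover rather than partition $D^-$) and handling leaves that are merely open pieces of immersed submanifolds carrying only bounded, not smooth, conditional densities, which is where an additional approximation is needed.

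Granting closability, part (i) follows from form calculus. The closure is closed by construction. Conservativeness is immediate because the constant function $1$ lies in $\overline{\mathcal D}_0(\mathcal{E}^{(\mu)})$ with $\nabla 1=0$, so $\mathcal{E}^{(\mu)}(1)=0$. For the Markovian property I would approximate the unit contraction $t\mapsto t\wedge1$ by smooth functions $\phi_\varepsilon$ with $0\leq\phi_\varepsilon'\leq1$ and $\phi_\varepsilon(t)\to t\wedge1$; the leafwise chain rule gives $\nabla(\phi_\varepsilon\circ\varphi)=\phi_\varepsilon'(\varphi)\,\nabla\varphi$, so $\phi_\varepsilon\circ\varphi\in\overline{\mathcal D}_0(\mathcal{E}^{(\mu)})$ with $\mathcal{E}^{(\mu)}(\phi_\varepsilon\circ\varphi)\leq\mathcal{E}^{(\mu)}(\varphi)$, and letting $\varepsilon\to0$ together with the lower semicontinuity of a closed form along an $L^2$-convergent, energy-bounded sequence yields $\varphi\wedge1\in\overline{\mathcal D}(\mathcal{E}^{(\mu)})$ and $\mathcal{E}^{(\mu)}(\varphi\wedge1)\leq\mathcal{E}^{(\mu)}(\varphi)$. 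Locality holds on the core pointwise: for $F,G\in C_c^\infty(\mathbb{R})$ with disjoint supports one has $\langle\nabla F(\varphi),\nabla G(\varphi)\rangle=F'(\varphi)G'(\varphi)\|\nabla\varphi\|^2=0$ $\mu$-a.e., and this transfers to the closure by the standard continuity of composition with such functions in the form norm.

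Part (ii) is then the representation theorem for closed, densely defined, nonnegative symmetric forms, \cite[Section VIII.6]{RS80}: there is a unique non-positive self-adjoint operator $\overline{\mathcal L}^{(\mu)}$ with $\mathcal{D}(\overline{\mathcal L}^{(\mu)})\subseteq\overline{\mathcal D}(\mathcal{E}^{(\mu)})$ and $\int_{D^-}\overline{\mathcal L}^{(\mu)}u\,\varphi\,d\mu=-\mathcal{E}^{(\mu)}(u,\varphi)$ for all $u\in\mathcal{D}(\overline{\mathcal L}^{(\mu)})$ and $\varphi\in\overline{\mathcal D}(\mathcal{E}^{(\mu)})$; taking $\varphi=1$, which is admissible by conservativeness and satisfies $\mathcal{E}^{(\mu)}(u,1)=0$, yields \eqref{E:harmonic}. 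For part (iii) I would first note that $\mathcal{D}_0(\mathcal{E}^{(\mu)})$ is dense in $L^2(D^-,\mu)$: indeed $C^k(M)|_{D^-}\subseteq\mathcal{D}_0(\mathcal{E}^{(\mu)})$ by Proposition \ref{P:dense}(i) (using compactness of $\overline U$ to bound $\|\nabla_M g\|$, hence the form), and it is dense by Proposition \ref{P:dense}(ii). The identical arguments then apply to the restricted form and its closure. Finally, since $\mathcal{D}_0(\mathcal{E}^{(\mu)})=\overline{\mathcal D}_0(\mathcal{E}^{(\mu)})\cap C(D^-)\subseteq\overline{\mathcal D}_0(\mathcal{E}^{(\mu)})$ and the two forms coincide there, every $\|\cdot\|_{\mathcal{D}(\mathcal{E}^{(\mu)})}$-Cauchy sequence in $\mathcal{D}_0(\mathcal{E}^{(\mu)})$ is one in $\overline{\mathcal D}_0(\mathcal{E}^{(\mu)})$, so $\mathcal{D}(\mathcal{E}^{(\mu)})\subseteq\overline{\mathcal D}(\mathcal{E}^{(\mu)})$ with agreeing forms, i.e.\ $(\mathcal{E}^{(\mu)},\overline{\mathcal D}(\mathcal{E}^{(\mu)}))$ extends $(\mathcal{E}^{(\mu)},\mathcal{D}(\mathcal{E}^{(\mu)}))$.
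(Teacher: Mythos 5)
Your proposal is correct, and it shares the paper's skeleton: closability of $(\mathcal{E}^{(\mu)},\overline{\mathcal{D}}_0(\mathcal{E}^{(\mu)}))$ is isolated as the one substantive analytic point (the paper's Lemma \ref{L:closable}), and everything else is obtained from the standard theory of closed symmetric forms, which the paper handles purely by citation (\cite[Theorems 1.3.1 and 3.1.1]{FOT94}, \cite{BH91}, \cite[Theorem 5.1]{AH20}) where you re-derive the Markov property via smoothed unit contractions and locality via the leafwise chain rule; parts (ii) and (iii), including the deduction of \eqref{E:harmonic} by testing against $\varphi=1$ and the extension statement via inclusion of cores, match the paper's route. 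The genuine difference is inside the closability lemma. The paper argues at the level of energies: given an $\mathcal{E}^{(\mu)}$-Cauchy sequence tending to $0$ in $L^2$, it fixes $\varepsilon>0$, uses the exhaustion $D^-=\bigcup_\ell D_\ell^-$ and the triangle inequality to make the energy outside some $D_\ell^-$ uniformly small (this is the ``additional approximation step''), and on each of the finitely many rectangles covering $D_\ell^-$ it quotes the superposition theorem \cite[Theorem 1.2]{AR90} (details in \cite[Proposition D.1]{AH20}) together with closedness of the classical leafwise Dirichlet integral \cite[Lemma 4.3]{Grigoryanbook} and the two-sided density bounds. You instead work with the limit object: the leafwise gradients converge to a section $g$, and you show $g=0$ $\mu$-a.e.; since the rectangles $\mathcal{R}_{\ell,i}$ form a countable cover of $D^-$, this localizes for free, so no $\varepsilon$/exhaustion step is needed, and on each rectangle you pass to a leafwise a.e.\ convergent subsequence and identify $g|_B=0$ by pairing against compactly supported vector fields --- in effect inlining a proof of the superposition step rather than citing it. Both routes are sound: the paper's keeps the argument short and entirely at the level of quadratic forms at the price of external references, while yours is more self-contained and makes transparent why the cover-rather-than-partition structure and the exhaustion cost nothing. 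The technical debts you flag are real but routine: completeness of the space of square-integrable leafwise sections (use a measurable field of Hilbert spaces, or embed into $L^2$-sections of $TM|_{D^-}$), and the subsequence extraction, which can simply be done per rectangle since the conclusion $g=0$ a.e.\ there does not depend on the chosen subsequence.
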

 
\begin{remark}\mbox{}
\begin{enumerate}
\item[(i)] The self-adjoint operators $(\overline{\mathcal{L}}^{(\mu)},\mathcal{D}(\overline{\mathcal{L}}^{(\mu)}))$ and $(\mathcal{L}^{(\mu)},\mathcal{D}(\mathcal{L}^{(\mu)}))$ may be seen as natural Laplacians on $D^-$. More precisely, they should be perceived as analogs of Laplacians on weighted manifolds in the sense of \cite[Definition 3.17]{Grigoryanbook}, see Section \ref{S:geodesicflow}.
\item[(ii)] One can run the argument with different choices of a priori domains (\ref{E:initialdomain}), and in general they lead to different closed forms and therefore also to different Laplacians. We concentrate on the closed form $(\mathcal{E}^{(\mu)},\overline{\mathcal{D}}(\mathcal{E}^{(\mu)}))$, because its domain is maximal from a 'transversal' point of view, and on the closed form $(\mathcal{E}^{(\mu)}, \mathcal{D}(\mathcal{E}^{(\mu)}))$, because its domain is well-connected to the topology of $D^-$. The different possible choices of a priori domains are connected to abstract Dirichlet problems, but this will be discussed elsewhere.
\end{enumerate}
\end{remark} 
 
Recall that a strongly continuous semigroup $(P_t)_{t>0}$ on $L^2(D^-,\mu)$ is said to be \emph{symmetric} if $\left\langle P_tu,v\right\rangle_{L^2(D^-,\mu)}= \left\langle u, P_tv\right\rangle_{L^2(D^-,\mu)}$
for every $u,v\in L^2(D^-,\mu)$ and all $t>0$, and \emph{(sub-) Markov} if for all $t>0$ and every $u\in L^2(D^-,\mu)$ such that $0\leq u\leq 1$ $\mu$-a.e. we have $0\leq P_tu\leq 1$ $\mu$-a.e. See \cite[Sections I.1 and I.2]{BH91} or \cite[Section 1.4]{FOT94}. A symmetric Markov semigroup $(P_t)_{t>0}$ on $L^2(D^-,\mu)$ is \emph{conservative} if $P_t 1=1$ for all $t>0$
and \emph{recurrent} if for every nonnegative $u\in L^1(D^-,\mu)$ we have $\int_0^\infty P_tu\:dt=\text{$0$ or $+\infty$}$ $\mu$-a.e. See \cite[p. 48/49]{FOT94}. The following statement is a consequence of \cite[Lemma 1.3.2 and Theorem 1.4.1]{FOT94} (see also \cite[Chapter I, Proposition 3.2.1]{BH91}) and \cite[Theorem 1.6.3 and Lemma 1.6.5]{FOT94}, its last claim is immediate from (\ref{E:harmonic}).

\begin{corollary}\label{C:semigroup}
Suppose that $\mu\in \mathcal{M}^\text{ac}_{bd}$. There exists a unique symmetric Markov semigroup $(\overline{P}_t)_{t>0}$ on $L^2(D^-,\mu)$ generated by $(\overline{\mathcal{L}}^{(\mu)},\mathcal{D}(\overline{\mathcal{L}}^{(\mu)}))$ as in Theorem \ref{T:closable} (ii). It is recurrent and conservative, and $\mu$ is an invariant measure for $(\overline{P}_t)_{t>0}$ in the sense that $\int_{D^-} \overline{P}_t u\: d\mu=\int_{D^-} u \:d\mu$ for all $u\in L^2(D^-,\mu)$. Corresponding statements are true for the unique symmetric Markov semigroup $(P_t)_{t>0}$ on $L^2(D^-,\mu)$ generated by $(\mathcal{L}^{(\mu)},\mathcal{D}(\mathcal{L}^{(\mu)}))$ as in Theorem \ref{T:closable} (iii). 
\end{corollary}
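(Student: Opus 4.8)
The plan is to deduce everything from the general correspondence between closed symmetric forms, non-positive self-adjoint operators and symmetric Markov semigroups, feeding in the properties already established in Theorem \ref{T:closable}. By Theorem \ref{T:closable} (i) the closed form $(\mathcal{E}^{(\mu)},\overline{\mathcal{D}}(\mathcal{E}^{(\mu)}))$ is a Dirichlet form, and by Theorem \ref{T:closable} (ii) the operator $(\overline{\mathcal{L}}^{(\mu)},\mathcal{D}(\overline{\mathcal{L}}^{(\mu)}))$ is the non-positive self-adjoint operator associated with it. First I would invoke \cite[Lemma 1.3.2]{FOT94} (equivalently \cite[Chapter I, Proposition 3.2.1]{BH91}) to obtain from $\overline{\mathcal{L}}^{(\mu)}$ the unique strongly continuous symmetric contraction semigroup $\overline{P}_t=e^{t\overline{\mathcal{L}}^{(\mu)}}$, $t>0$; uniqueness here is part of the one-to-one correspondence between non-positive self-adjoint operators and such semigroups. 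Then, since the form is Markovian, \cite[Theorem 1.4.1]{FOT94} yields that $(\overline{P}_t)_{t>0}$ is sub-Markovian, i.e. a symmetric Markov semigroup in the sense used in the excerpt, this Markov property being in fact equivalent to the Dirichlet property of the form.

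For recurrence and conservativeness I would reconcile the terminology. The notion of a conservative Dirichlet form used above, namely $1\in\overline{\mathcal{D}}(\mathcal{E}^{(\mu)})$ together with $\mathcal{E}^{(\mu)}(1)=0$, which holds by Theorem \ref{T:closable} (i), is precisely the recurrence criterion of \cite[Theorem 1.6.3]{FOT94}: since $1\in\overline{\mathcal{D}}(\mathcal{E}^{(\mu)})$ belongs in particular to the extended Dirichlet space and has vanishing energy, the form is recurrent. With recurrence in hand, conservativeness of the semigroup, $\overline{P}_t1=1$ for all $t>0$, follows from \cite[Lemma 1.6.5]{FOT94}, which asserts that recurrence implies conservativeness. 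At this step one should verify that the abstract hypotheses of the two cited results are genuinely met, namely membership of the constant function $1$ in the appropriate (extended) domain and finiteness of $\mu$; both hold here, since $\mu$ is a probability measure, so that $1\in L^2(D^-,\mu)$, and Theorem \ref{T:closable} (i) guarantees $1\in\overline{\mathcal{D}}(\mathcal{E}^{(\mu)})$.

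It remains to establish the invariance of $\mu$. For $u\in\mathcal{D}(\overline{\mathcal{L}}^{(\mu)})$ the map $t\mapsto \int_{D^-}\overline{P}_tu\,d\mu$ is differentiable with derivative $\int_{D^-}\overline{\mathcal{L}}^{(\mu)}\overline{P}_tu\,d\mu$, and since $\overline{P}_tu\in\mathcal{D}(\overline{\mathcal{L}}^{(\mu)})$, identity (\ref{E:harmonic}) shows that this derivative vanishes; letting $t\downarrow 0$ gives $\int_{D^-}\overline{P}_tu\,d\mu=\int_{D^-}u\,d\mu$ for all $t>0$. Equivalently, one may argue from symmetry and conservativeness, $\int_{D^-}\overline{P}_tu\,d\mu=\langle \overline{P}_tu,1\rangle_{L^2(D^-,\mu)}=\langle u,\overline{P}_t1\rangle_{L^2(D^-,\mu)}=\langle u,1\rangle_{L^2(D^-,\mu)}=\int_{D^-}u\,d\mu$. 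Since $\mathcal{D}(\overline{\mathcal{L}}^{(\mu)})$ is dense in $L^2(D^-,\mu)$ and both sides are continuous in $u$ (because $\overline{P}_t$ is bounded and $1\in L^2(D^-,\mu)$), the identity extends to all $u\in L^2(D^-,\mu)$.

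Finally, the corresponding statements for $(P_t)_{t>0}$ generated by $(\mathcal{L}^{(\mu)},\mathcal{D}(\mathcal{L}^{(\mu)}))$ follow by repeating the same argument verbatim, using the parallel assertions of Theorem \ref{T:closable} (iii) in place of (i) and (ii). The entire argument is essentially a matter of correctly invoking the cited abstract theory; the only genuinely non-mechanical point is the reconciliation of the excerpt's \emph{conservative form} with the recurrence criterion of \cite[Theorem 1.6.3]{FOT94}, and this is where I expect the main, and rather modest, obstacle to lie.
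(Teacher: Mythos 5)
Your proposal is correct and takes essentially the same approach as the paper, which also obtains the semigroup and its Markov property from \cite[Lemma 1.3.2 and Theorem 1.4.1]{FOT94} (resp. \cite[Chapter I, Proposition 3.2.1]{BH91}), recurrence and conservativeness from \cite[Theorem 1.6.3 and Lemma 1.6.5]{FOT94} via $1\in\overline{\mathcal{D}}(\mathcal{E}^{(\mu)})$ and $\mathcal{E}^{(\mu)}(1)=0$, and the invariance of $\mu$ directly from (\ref{E:harmonic}). Your extra alternative derivation of invariance from symmetry and $\overline{P}_t1=1$ is a harmless addition, not a different method.
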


The key observation to prove Theorem \ref{T:closable} is the following.
\begin{lemma}\label{L:closable}
Suppose that $\mu\in \mathcal{M}^\text{ac}_{bd}$. Then $(\mathcal{E}^{(\mu)}, \overline{\mathcal{D}}_0(\mathcal{E}^{(\mu)}))$ is a closable quadratic form on $L^2(D^-,\mu)$.
\end{lemma}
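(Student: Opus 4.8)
The plan is to verify the standard closability criterion: if $\varphi_n\in\overline{\mathcal{D}}_0(\mathcal{E}^{(\mu)})$ satisfies $\varphi_n\to 0$ in $L^2(D^-,\mu)$ and $\mathcal{E}^{(\mu)}(\varphi_n-\varphi_m)\to 0$ as $n,m\to\infty$, then $\mathcal{E}^{(\mu)}(\varphi_n)\to 0$, see \cite[Section VIII.6]{RS80} or \cite[Chapter 1]{FOT94}. The hypothesis says precisely that the leafwise gradient fields $z\mapsto\nabla\varphi_n(z)\in T_zW(z)$ form a Cauchy sequence in the $L^2(D^-,\mu)$-sense; by completeness they converge to some section $G$, and since $\mathcal{E}^{(\mu)}(\varphi_n)=\int_{D^-}\|\nabla\varphi_n\|^2_{T_zW(z)}\,d\mu\to\int_{D^-}\|G\|^2_{T_zW(z)}\,d\mu$, the entire statement reduces to showing $G=0$ $\mu$-a.e.

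Because $D^-=\bigcup_{\ell,i}\mathcal{R}_{\ell,i}$ (the $D_\ell^-$ exhaust $D^-$ and each is covered by finitely many rectangles), and rectangles of measure zero carry no $\mu$-mass, it suffices to prove $G=0$ $\mu$-a.e. on each rectangle $\mathcal{R}=\mathcal{R}_{\ell,i}$ with $\mu(\mathcal{R})>0$. On such a rectangle I would apply the disintegration \eqref{E:disintegration}: both $\int_{\mathcal{R}}\abs{\varphi_n}^2\,d\mu\to 0$ and $\int_{\mathcal{R}}\|\nabla\varphi_n-G\|^2_{T_zW(z)}\,d\mu\to 0$ become integrals over $\mathcal{P}_{\ell,i}$ of the leafwise quantities $\int_B\abs{\varphi_n}^2\,d\mu_B$ and $\int_B\|\nabla\varphi_n-G\|^2\,d\mu_B$. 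Each is a nonnegative sequence converging to $0$ in $L^1(\hat{\mu}_{\mathcal{P}_{\ell,i}})$, so after passing to a common subsequence both tend to $0$ for $\hat{\mu}_{\mathcal{P}_{\ell,i}}$-a.e.\ leaf $B$; as $G$ is fixed, working along a subsequence is harmless.

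Fixing such a good leaf $B$, the key input is leafwise closability. The restriction $\varphi_n|_B$ lies in $C^1(B)$ since $\varphi_n\in C^{u,1}(D^-)$, it tends to $0$ in $L^2(B,\mu_B)$, and its leafwise gradient converges to $G|_B$ in $L^2(B,\mu_B)$. Because $d\mu_B/dm_B$ is bounded above and below, the norms of $L^2(B,\mu_B)$ and $L^2(B,m_B)$ are equivalent and the corresponding energies are comparable; in particular $m_B(B)<\infty$ and $\varphi_n|_B\to 0$, $\nabla\varphi_n|_B\to G|_B$ in $L^2(B,m_B)$. The gradient operator on the Riemannian manifold $W(z)\supseteq B$ is closable in $L^2(B,m_B)$: testing against a smooth vector field $\psi$ compactly supported in $B$ and integrating by parts gives $\int_B\langle G,\psi\rangle\,dm_B=\lim_n\int_B\langle\nabla\varphi_n,\psi\rangle\,dm_B=-\lim_n\int_B\varphi_n\,\diverg\psi\,dm_B=0$, so that $G|_B=0$ $m_B$-a.e., hence $\mu_B$-a.e. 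Integrating over $\mathcal{P}_{\ell,i}$ yields $\int_{\mathcal{R}}\|G\|^2_{T_zW(z)}\,d\mu=0$, and ranging over all positive-measure rectangles gives $G=0$ $\mu$-a.e.

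I expect the main obstacle to lie at the interface between the measure-theoretic and the differential parts of the argument: one must make sense of the leafwise restriction $G|_B$ of the global limit field and ensure that the disintegration genuinely reduces global $L^2(\mu)$-convergence of $\nabla\varphi_n$ to leafwise $L^2(\mu_B)$-convergence for $\hat{\mu}_{\mathcal{P}_{\ell,i}}$-a.e.\ $B$. This is exactly where the subsequence extraction and the \emph{uniform two-sided} density bounds defining $\mathcal{M}^\text{ac}_{bd}$ are indispensable: without the lower bound the transfer of the classical Riemannian closability to the conditional measures, and even the finiteness of $m_B(B)$, may fail. A secondary technical point is the justification of the leafwise integration by parts on the open pieces $B$, for which it is enough to test against vector fields compactly supported inside $B$, so that no boundary terms arise.
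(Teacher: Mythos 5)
Your proof is correct, and its core is the same as the paper's: reduce, via the disintegration \eqref{E:disintegration} and the two-sided density bounds built into $\mathcal{M}^\text{ac}_{bd}$, to the closability of the classical Dirichlet integral on the individual leaf pieces $B$. The differences lie in how this reduction is organized. The paper never introduces your limit field $G$: it fixes $\varepsilon>0$, uses the $\mathcal{E}^{(\mu)}$-Cauchy property to make $\sup_{j\geq j_\varepsilon}\int_{D^-\setminus D_\ell^-}\left\|\nabla\varphi_j\right\|^2d\mu$ small for a suitable $\ell$ (this is the ``additional approximation step'' forced by the exhaustion $D^-=\bigcup_\ell D_\ell^-$), and then shows $\lim_j\int_{D_\ell^-}\left\|\nabla\varphi_j\right\|^2d\mu=0$ by covering $D_\ell^-$ with finitely many rectangles and invoking, on each rectangle, the superposition theorem of Albeverio--R\"ockner \cite{AR90} (closability of superpositions of closable forms, with details as in \cite{AH20}). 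Your route replaces both steps: introducing $G$ as the $L^2(\mu)$-limit of the gradient fields turns closability into the purely local statement that $G=0$ $\mu$-a.e.\ on every positive-measure rectangle, so no tail estimate over the exhaustion is needed at all, and your subsequence/a.e.-leaf argument with leafwise integration by parts is in effect a self-contained proof of the relevant special case of the cited superposition theorem. What the paper's version buys is brevity and the outsourcing of measurability bookkeeping to \cite{AR90}; what yours buys is a cleaner global structure and independence from that reference, at the cost of two points you should make explicit: (a) completeness of the space of $\mu$-square-integrable leafwise-tangent vector fields, which holds because such fields are $L^2(\mu)$-sections of $TM|_{D^-}$ and tangency to the closed subspaces $T_zW(z)\subset T_zM$ survives a.e.\ limits along a subsequence; and (b) the leafwise integration by parts, which as written uses the Riemannian divergence on $B$ and hence needs the inherited metric to be at least $C^1$, i.e.\ $r\geq 2$ --- for $r=1$ (the $C^{1+\alpha}$ setting of Theorem \ref{T:Gibbsexist}) one should instead test in local charts against Lebesgue measure, where $m_B$ has a continuous density locally bounded above and away from zero; the same caveat is implicit in the paper's appeal to \cite{Grigoryanbook}.
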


With the aid of Proposition \ref{P:dense} and Lemma \ref{L:closable} Theorem \ref{T:closable} now follows easily using \cite[Theorem 3.1.1]{FOT94} and \cite[Theorem 1.3.1]{FOT94} or \cite[Chapter I, Propositions 1.2.2 or 3.2.1]{BH91}. See \cite[Theorem 5.1 and its proof]{AH20}.

We provide a proof of Lemma \ref{L:closable}. It is very similar to the proof of \cite[Lemma 5.1]{AH20}, but the definition of rectangles in the style of \cite{PesinSinai, Pesin92} used here allows a slight simplification, while the representation of $D^-$ as a union of the $D_\ell^-$ needs an additional approximation step.

\begin{proof}
Suppose that $(\varphi_j)_{j=1}^\infty\subset \overline{D}_0(\mathcal{E}^{(\mu)})$ is Cauchy w.r.t. the seminorm $(\mathcal{E}^{(\mu)})^{1/2}$ and such that $\lim_j \left\|\varphi_j\right\|_{L^2(D^-,\mu)}=0$.  Let $\varepsilon>0$. Choose $j_\varepsilon\geq 1$ large enough so that $\mathcal{E}^{(\mu)}(\varphi_j-\varphi_k)^{1/2}<\varepsilon/2$ for all $j,k\geq j_\varepsilon$ and choose $\ell\geq 1$ large enough such that 
\[\left(\int_{D^-\setminus D_\ell^-}\left\|\nabla \varphi_{j_\varepsilon}(z)\right\|_{T_zW(z)}^2\mu(dz)\right)^{1/2}<\frac{\varepsilon}{2}.\]
Then by the triangle inequality we have 
\begin{equation}\label{E:smallerthaneps}
\sup_{j\geq j_\varepsilon}\left(\int_{D^-\setminus D_\ell^-} \left\|\nabla \varphi_{j}(z)\right\|_{T_zW(z)}^2\mu(dz)\right)^{1/2}<\varepsilon.
\end{equation}
We claim that 
\begin{equation}\label{E:claim}
\lim_j \int_{D_\ell^-}\left\|\nabla \varphi_{j}(z)\right\|_{T_zW(z)}^2\mu(dz)=0.
\end{equation}
If so, then in combination with (\ref{E:smallerthaneps}) we obtain $\lim_j \mathcal{E}^{(\mu)}(\varphi_j)<\varepsilon$, and since $\varepsilon$ was arbitrary, this shows the closability of $(\mathcal{E}^{(\mu)}, \overline{\mathcal{D}}_0(\mathcal{E}^{(\mu)}))$. 

To verify (\ref{E:claim}) let $\mathcal{R}_{\ell,1},...,\mathcal{R}_{\ell,n_\ell}$ be a finite cover of $D_\ell^-$ by rectangles $\mathcal{R}_{\ell,i}$. We may assume they all have positive measure $\mu$; if not, we can simply omit those rectangles that have measure zero. For each $i$ the quantity
\begin{multline}\label{E:makesmall}
\int_{\mathcal{P}_{\ell,i}}\int_B \left\|\nabla_{W(\zeta)}(\varphi_j-\varphi_k)(\zeta)\right\|_{T_\zeta W(\zeta)}^2\mu_B(d\zeta)\hat{\mu}_{\mathcal{P}_{\ell,i}}(dB)\\
=\int_{\mathcal{R}_{\ell,i}}\left\|\nabla_{W(z)}(\varphi_j-\varphi_k)(z)\right\|_{T_z W(z)}^2\mu(dz)
\end{multline}
can be made arbitrarily small if $j$ and $k$ are chosen large enough. Here we have used (\ref{E:disintegration}). Clearly also 
\begin{equation}\label{E:gotozero}
\lim_j \int_{\mathcal{P}_{\ell,i}}\int_B (\varphi_j(\zeta))^2\mu_B(d\zeta)\hat{\mu}_{\mathcal{P}_{\ell,i}}(dB)
=\lim_j\int_{\mathcal{R}_{\ell,i}} (\varphi_j(z))^2\mu(dz)=0.
\end{equation}
Each $B\in \mathcal{P}_{\ell,i}$ is an open subset of some Riemannian manifold $W=W(z)$, hence itself a Riemannian manifold. Therefore the Dirichlet integral 
\begin{equation}\label{E:DirichletonB}
\psi\mapsto \int_B \left\|\nabla_W \psi(\zeta)\right\|_{T_\zeta W}^2\mu_B(d\zeta)
\end{equation}
on $B$ with domain 
\begin{equation}\label{E:Sobospace}
W^{1,2}(B)=\left\lbrace \psi\in L^2(B,\mu_B):\ \int_B \left\|\nabla_W \psi(\zeta)\right\|_{T_\zeta W}^2\mu_B(d\zeta)<+\infty\right\rbrace
\end{equation}
is a Dirichlet form on $L^2(B,\mu_B)$: The classical Dirichlet integral on $B$ with $m_B$ in place of $\mu_B$ in (\ref{E:DirichletonB}) and (\ref{E:Sobospace}) is well-known to be closed, see for instance \cite[Lemma 4.3]{Grigoryanbook}, and since the density $d\mu_B/dm_B$ is bounded and bounded away from zero, this easily carries over to (\ref{E:DirichletonB}) and (\ref{E:Sobospace}). It follows in particular that (\ref{E:DirichletonB}), endowed with the domain $C^1(B)$, is closable on $L^2(B,\mu_B)$. Therefore also the quadratic form 
\begin{multline}
\varphi\mapsto \int_{\mathcal{R}_{\ell,i}}\left\|\nabla_{W(z)} \varphi (z)\right\|_{T_z W(z)}^2\mu(dz)\notag\\
=\int_{\mathcal{P}_{\ell,i}}\int_B \left\|\nabla_{W(\zeta)}\varphi (\zeta)\right\|_{T_\zeta W(\zeta)}^2\mu_B(d\zeta)\hat{\mu}_{\mathcal{P}_{\ell,i}}(dB),
\end{multline}
endowed with the domain $\overline{\mathcal{D}}_0(\mathcal{E}^{(\mu)})$, is closable on $L^2(D_\ell^-, \mu)$: This follows using a well-known superposition argument from \cite[Theorem 1.2]{AR90} (see also \cite[Chapter V, Proposition 3.11]{BH91} or \cite[Section 3.1. (2$^\circ$)]{FOT94}), details may be found in \cite[Proposition D.1]{AH20}. Together with (\ref{E:makesmall}) and (\ref{E:gotozero}) and the triangle inequality this implies that
\begin{multline}
\lim_j \left(\int_{D_\ell^-} \left\|\nabla \varphi_j(z)\right\|_{ T_zW(z)}^2 \mu(dz)\right)^{1/2}\notag\\
\leq \sum_{i=1}^{n_\ell}\lim_j\left(\int_{\mathcal{R}_{\ell,i}}\left\|\nabla \varphi_j(z)\right\|_{ T_zW(z)}^2 \mu(dz)\right)^{1/2}=0,
\end{multline}
what shows (\ref{E:claim}).
\end{proof}

\section{Regularity and symmetric diffusion processes}\label{S:regularity}

If $D^-$ is closed (hence compact) and $\supp\mu=D^-$ we write $\Lambda:=D^-$. In this case we can employ further results from the theory of regular Dirichlet forms, \cite{FOT94}. A Dirichlet form $(\mathcal{E},\mathcal{D}(\mathcal{E}))$ on $L^2(\Lambda,\mu)$ is \emph{regular} if $\mathcal{D}(\mathcal{E})\cap C(\Lambda)$ is dense in $\mathcal{D}(\mathcal{E})$ with $\Vert\cdot\Vert_{\mathcal{D}(\mathcal{E})}$-norm and dense in $C(\Lambda)$ with the uniform norm. A regular Dirichlet form $(\mathcal{E},\mathcal{D}(\mathcal{E}))$ on $L^2(\Lambda,\mu)$ is said to be \emph{strongly local} if $\mathcal{E}(u,v)=0$ whenever $u,v\in \mathcal{D}(\mathcal{E})\cap C(\Lambda)$ are such that $v$ is constant on $\supp u$. See \cite[p.6]{FOT94}.

Statement (i) in the following result is immediate, statement (ii) is a consequence of \cite[Theorems 7.2.1 and 7.2.2]{FOT94}.

\begin{theorem}\label{T:process}
Suppose that $\Lambda:=D^-$ is closed, $\mu\in \mathcal{M}_{bd}^\text{ac}$ and $\supp\mu=\Lambda$. 
\begin{enumerate}
\item[(i)] The Dirichlet form $(\mathcal{E}^{(\mu)}, \mathcal{D}(\mathcal{E}^{(\mu)}))$ is regular and strongly local.
\item[(ii)] There is a $\mu$-symmetric Hunt diffusion process $((X_t)_{t\geq 0}, (\mathbb{P}^x)_{x\in \Lambda\setminus \mathcal{N}})$ on $\Lambda$, with starting points $x$ outside some properly exceptional set $\mathcal{N}$, such that for all bounded Borel functions $u$ on $\Lambda$, any $t>0$ and $\mu$-a.e. $x\in \Lambda$ we have $P_tu(x)=\mathbb{E}^x[u(X_t)]$.
\end{enumerate}
\end{theorem}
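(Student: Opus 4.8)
The plan is to prove the two claims of Theorem~\ref{T:process} separately, with statement (ii) being a direct application of the general Hunt-process construction theory once the regularity established in (i) is in place.

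For statement (i), my goal is to verify that the Dirichlet form $(\mathcal{E}^{(\mu)}, \mathcal{D}(\mathcal{E}^{(\mu)}))$ is regular and strongly local. The regularity has two requirements. First, $\mathcal{D}(\mathcal{E}^{(\mu)})\cap C(\Lambda)$ must be dense in $\mathcal{D}(\mathcal{E}^{(\mu)})$ with respect to the form norm $\Vert\cdot\Vert_{\mathcal{D}(\mathcal{E}^{(\mu)})}$. This is essentially built into the construction: by definition, $\mathcal{D}(\mathcal{E}^{(\mu)})$ is the closure of $\mathcal{D}_0(\mathcal{E}^{(\mu)})=\overline{\mathcal{D}}_0(\mathcal{E}^{(\mu)})\cap C(D^-)$ in the form norm, so $\mathcal{D}_0(\mathcal{E}^{(\mu)})\subseteq\mathcal{D}(\mathcal{E}^{(\mu)})\cap C(\Lambda)$ is by design a form-norm-dense subset of continuous functions. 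Second, $\mathcal{D}(\mathcal{E}^{(\mu)})\cap C(\Lambda)$ must be uniformly dense in $C(\Lambda)$. Here I would invoke the fact that $C^1(M)|_{D^-}\subseteq C^{u,1}(D^-)\cap C(D^-)$, and since $\Lambda=D^-$ is compact, each $\varphi\in C^1(M)|_{D^-}$ has bounded leafwise gradient, so $C^1(M)|_{D^-}\subseteq \overline{\mathcal{D}}_0(\mathcal{E}^{(\mu)})\cap C(D^-)=\mathcal{D}_0(\mathcal{E}^{(\mu)})\subseteq\mathcal{D}(\mathcal{E}^{(\mu)})\cap C(\Lambda)$. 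The restrictions of $C^1(M)$-functions separate points of the compact set $\Lambda$ and form an algebra (closed under products, since $C^1(M)$ is), so by the Stone--Weierstrass theorem $C^1(M)|_{D^-}$ is uniformly dense in $C(\Lambda)$, which gives the second requirement.

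Strong locality is the remaining point of (i). I would verify that if $u,v\in\mathcal{D}(\mathcal{E}^{(\mu)})\cap C(\Lambda)$ and $v$ is constant on $\supp u$, then $\mathcal{E}^{(\mu)}(u,v)=0$. The cleanest route is to note that $(\mathcal{E}^{(\mu)},\overline{\mathcal{D}}(\mathcal{E}^{(\mu)}))$ is already known from Theorem~\ref{T:closable}(i) to be a \emph{local} Dirichlet form, and to upgrade locality to strong locality using that the form is a superposition of leafwise Dirichlet integrals. On each leaf-piece $B$ the relevant form is the classical weighted Dirichlet integral (\ref{E:DirichletonB}), which is strongly local because $\nabla_W u$ vanishes $m_B$-a.e.\ on $\{u=0\}$ and hence the integrand $\langle\nabla_W u,\nabla_W v\rangle$ vanishes wherever $v$ is locally constant on a neighborhood of $\supp u$; integrating over the disintegration (\ref{E:disintegration}) preserves this. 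The technical care lies in passing from continuous $u,v$ in the form domain to their leafwise-$C^1$ approximants while controlling the gradient products on the level of the conditional measures.

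For statement (ii), once (i) gives a regular strongly local Dirichlet form on the locally compact (indeed compact) separable metric space $\Lambda$, I would directly apply \cite[Theorem 7.2.1]{FOT94} to obtain an associated $\mu$-symmetric Hunt process $((X_t)_{t\geq 0},(\mathbb{P}^x)_{x\in\Lambda\setminus\mathcal{N}})$ with a properly exceptional set $\mathcal{N}$, and \cite[Theorem 7.2.2]{FOT94} to conclude that strong locality forces the process to be a diffusion, i.e.\ to have continuous sample paths up to its lifetime. The identification $P_tu(x)=\mathbb{E}^x[u(X_t)]$ for bounded Borel $u$, all $t>0$ and $\mu$-a.e.\ $x$ is the standard relation between the semigroup of the form and the transition function of the process. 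I expect the main obstacle to be the careful verification of strong locality rather than regularity, since upgrading plain locality (disjoint supports) to strong locality (one function constant on the support of the other) requires genuinely using the leafwise differential structure and the absolute continuity of the conditional measures, whereas regularity follows almost formally from the construction of $\mathcal{D}(\mathcal{E}^{(\mu)})$ as a form-closure of a space of continuous functions together with Stone--Weierstrass.
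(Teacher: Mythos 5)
Your proposal is correct and takes essentially the same approach as the paper, whose entire proof consists of declaring (i) immediate --- regularity being built into the definition of $\mathcal{D}(\mathcal{E}^{(\mu)})$ as the form-norm closure of the space of continuous functions $\mathcal{D}_0(\mathcal{E}^{(\mu)})\supseteq C^1(M)|_{D^-}$, the latter uniformly dense in $C(\Lambda)$ by Stone--Weierstrass --- and citing \cite[Theorems 7.2.1 and 7.2.2]{FOT94} for (ii), exactly as you do. The one point where you supply more detail than the paper, strong locality, is also where your sketch leaves a (fillable) technical step in passing from the core to all of $\mathcal{D}(\mathcal{E}^{(\mu)})\cap C(\Lambda)$; this can be bypassed by noting that a regular Dirichlet form which is local in the Bouleau--Hirsch sense and conservative --- both already established in Theorem \ref{T:closable} --- has vanishing jumping and killing measures in its Beurling--Deny decomposition and is therefore strongly local.
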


A strong Markov process is called a \emph{diffusion process} if its paths are continuous almost surely, \cite[Section 4.5]{FOT94}. It is said to be a \emph{Hunt process} if it satisfies certain specific regularity properties, see \cite[Section I.9]{BG68} or \cite[Appendix A.2]{FOT94}. A Borel set $\mathcal{N}\subset\Lambda$ is said to be \emph{properly exceptional} for a Hunt process $((X_t)_{t\geq 0}, (\mathbb{P}^x)_{x\in \Lambda\setminus \mathcal{N}})$ if it is a $\mu$-null set and $\mathbb{P}^x(X_t\in\mathcal{N}\ \text{for some $t\geq 0$})=0$, $x\in \Lambda\setminus \mathcal{N}$, see \cite[p. 134 and Theorem 4.1.1]{FOT94}. A Hunt process $((X_t)_{t\geq 0}, (\mathbb{P}^x)_{x\in \Lambda\setminus \mathcal{N}})$ is said to be \emph{$\mu$-symmetric} if 
\[\int_\Lambda \mathbb{E}^x[u(X_t)]v(x) \:\mu(dx)= \int_\Lambda u(x) \mathbb{E}^x[v(X_t)] \:\mu(dx)\]
for any $t>0$ and every bounded Borel functions $u, v$ on $\Lambda$, see \cite[Lemma 4.1.3]{FOT94}.

\begin{remark}\mbox{}
\begin{enumerate}
\item[(i)] The Hunt process $((X_t)_{t\geq 0}, (\mathbb{P}^x)_{x\in \Lambda\setminus \mathcal{N}})$ is unique up to a suitable type of equivalence, see \cite[Theorem 4.2.7]{FOT94}. It has infinite life time, \cite[Problem 4.5.1]{FOT94}. It may be regarded as a natural analog (in the leafwise sense) of Brownian motion.
\item[(ii)] For every bounded Borel $u$ and any $t>0$ the function $x\mapsto \mathbb{E}^x[u(X_t)]$ is an $(\mathcal{E}^{(\mu)},\mathcal{D}(\mathcal{E}^{(\mu)}))$-quasi-continuous version of $P_tu$, \cite[Theorem 4.2.3]{FOT94}.
\end{enumerate}
\end{remark}

\section{Partially hyperbolic attractors}\label{S:PH}

Let $M$ be a smooth compact Riemannian manifold. A \emph{topological attractor} for a $C^{r+\alpha}$-diffeomorphism $f:M\rightarrow M$ is a compact subset $\Lambda\subseteq M$ for which there is a neighborhood $U$ such that $\overline{f(U)}\subseteq U$ and
\begin{equation}
\Lambda=\underset{n\geq0}\bigcap\, f^n(U).
\end{equation}
The set $\Lambda$ is $f$-invariant, i.e. $f(\Lambda)=\Lambda$, and it is the largest subset of $U$ with this property. See for instance  \cite{BrinStuck} or \cite{KatokHasselblatt}.

A compact $f$-invariant subset $\Lambda\subseteq M$ is said to be \textit{partially hyperbolic} if for each $z\in \Lambda$ there exists a continuous $df$-invariant splitting of the tangent space $T_zM=E^s(z)\oplus E^c(z)\oplus E^u(z)$ and there are constants $c>0$, $0<\lambda_1\leq\mu_1<\lambda_2\leq\mu_2<\lambda_3\leq\mu_3$ with $\mu_1<1<\lambda_3$ such that for each $z\in \Lambda$ we have 
\begin{align}
 c^{-1}\lambda^n_1\left\| v\right\|_{T_{z}M}\leq\left\|d_zf^n v\right\|_{T_{f^n(z)}M}&\leq c\mu^n_1 \left\| v \right\|_{T_zM} \qquad \text{for $v\in E^s(z)$ and $n\geq 0$};\notag\\
 c^{-1}\lambda^n_2\left\| v\right\|_{T_{z}M}\leq\left\|d_zf^n v\right\|_{T_{f^n(z)}M}&\leq c\mu^n_2 \left\| v \right\|_{T_zM} \qquad \text{for $v\in E^c(z)$ and $n\geq 0$};\notag\\
 c^{-1}\lambda^n_3\left\| v\right\|_{T_{z}M}\leq\left\|d_zf^n v\right\|_{T_{f^n(z)}M}&\leq c\mu^n_3 \left\| v \right\|_{T_zM} \qquad \text{for $v\in E^u(z)$ and $n\geq 0$}.\notag
\end{align}
The subspaces $E^s(z)$, $E^u(z)$ and $E^c(z)$ are called \emph{stable}, \emph{unstable} and \emph{central} subspaces at $z$, respectively. The tangent vectors in $E^c(z)$ may be contracted or expanded, but not as sharply as vectors in $E^s(z)$ and $E^u(z)$; the central direction is `dominated' by the hyperbolic behaviour of the stable and unstable directions. See \cite[Section 2.1.4]{HaPe} or \cite[p. 13/14]{Pesin}. If $E^c(z)=\lbrace0\rbrace$ then $\Lambda$ is called a \emph{uniformly hyperbolic set}. 

If $M$ itself is a partially hyperbolic set then $f$ is called a \emph{partially hyperbolic diffeomorphism}. If, in addition, $E^c(z)=\lbrace0\rbrace$ then $f$ is called an \emph{Anosov diffeomorphism}. 

A topological attractor $\Lambda\subseteq M$ is called a \emph{partially hyperbolic attractor} if it is a partially hyperbolic set.

Now let $\Lambda$ be a partially hyperbolic attractor for a $C^{r+\alpha}$-diffeomorphism $f$ as above. Given a point $z$ in $\Lambda$, by the Stable Manifold Theorem \cite[Sections 4.2--4.5]{Pesin} we can construct $C^r$-embedded submanifolds $V^s(z)$ and $V^u(z)$ of $M$, called respectively \emph{local stable} and \emph{local unstable manifolds at $z$} such that
\begin{equation}\label{E:localmfds}
T_zV^{s}(z)=E^s(z)\quad \text{ and }\quad T_zV^{u}(z)=E^u(z).
\end{equation}

The \emph{global stable} and \emph{global unstable manifolds} $W^s(z)$ and $W^u(z)$ are defined by
\begin{equation}\label{E:localtoglobal}
W^{s}(z)=\underset{n\geq0}\bigcup f^{-n}(V^{s}(f^{n}(z))) \quad \text{ and }\quad 
W^{u}(z)=\underset{n\geq0}\bigcup f^{n}(V^{u}(f^{-n}(z))),
\end{equation}
that is, by iterating the local stable and unstable manifolds $V^s(z)$ and $V^u(z)$ backward resp. forward. These manifolds $W^s(z)$ and $W^u(z)$ are $C^r$-immersed submanifolds of $M$. In the special case that $\Lambda=M$, they are the leaves of the so-called stable and unstable foliations in the sense of \cite[Section 3.1]{HaPe}, see also \cite[Chapter 4]{Pesin}. In general, the central subspace $E^c(z)$ is not integrable, \cite{HPS1, HPS2}. 

\begin{remark}
In the context of partial hyperbolicity the manifolds $V^s(z)$ and $V^u(z)$ in (\ref{E:localmfds}) are also referred to as \emph{local strongly stable} and \emph{local strongly unstable} manifolds at $z$, respectively. The manifolds $W^s(z)$ and $W^u(z)$ in (\ref{E:localtoglobal}) are also called the \emph{global strongly stable} and \emph{global strongly unstable} manifolds at $z$, respectively.
\end{remark}

Any partially hyperbolic attractor $\Lambda$ contains the global unstable manifolds of its points, 
\begin{equation}\label{E:partiallyglobal}
\Lambda=\bigcup_{z\in\Lambda} W^u(z), 
\end{equation}
see for instance \cite[Theorem 9.1]{HaPe}, and the union (\ref{E:partiallyglobal}) is disjoint.

As usual $B(x,\delta)$ denotes the open ball in $M$ with center $x\in M$ and radius $\delta>0$. Given $x\in \Lambda$ and $\delta>0$ let 
\begin{equation}\label{E:rectanglePH}
\mathcal{R}(x,\delta)=\bigcup_{z\in B(x,\delta)\cap \Lambda}V^u(z).
\end{equation}
For sufficiently small $\delta$ the set $\mathcal{R}(x,\delta)$ admits a measurable partition $\mathcal{P}_{\mathcal{R}(x,\delta)}$ into local unstable manifolds $V=V^u(z)$.

Recall that two measures on the same space are said to be \emph{equivalent} if they are mutually absolutely continuous. An $f$-invariant Borel probability measure $\mu$ on $\Lambda$ is called a \textit{Gibbs u-measure} if for any $x\in\Lambda$ and (sufficiently small) $\delta>0$ such that $\mu(\mathcal{R}(x,\delta))>0$ we have 
\begin{equation}\label{E:disintegrationPH}
\mu(E)=\int_{\mathcal{P}_{\mathcal{R}(x,\delta)}} \, \mu_{V}(E)\, \hat{\mu}_{\mathcal{P}_{\mathcal{R}(x,\delta)}}(dV), \quad \text{$E\subseteq \mathcal{R}(x,\delta)\ $ Borel,}
\end{equation}
where $\hat{\mu}_{\mathcal{P}_{\mathcal{R}(x,\delta)}}$ is the pushforward of $\mu$ under the canonical projection onto the elements $V$ of the partition $\mathcal{P}_{\mathcal{R}(x,\delta)}$, and the conditional measures $\mu_V$ are equivalent to the Riemannian volumes $m_V$ on the manifolds $V$. See for instance \cite[Section 5.2]{CLP17}.

Gibbs u-measures on partially hyperbolic attractors can be constructed in the same way as SRB-measures on uniformly hyperbolic attractors, \cite{CLP17}: Given a local unstable manifold $V^u(z)$, $z\in \Lambda$, one can consider the sequence $(\mu_n)_n$ of probability measures $\mu_n$ defined by
\begin{equation}\label{G:measure}
\mu_n:=\frac{1}{n}\sum_{i=0}^{n-1} f_*^im_{V^u(z)},
\end{equation}
where $f_*^im_{V^u(z)}$ is the pushforward of the measure $m_{V^u(z)}$. We quote the following result from \cite[Theorem 4]{PesinSinai}; further details and descriptions can be found in \cite[Section 5]{CLP17} and \cite[Section 9]{HaPe}. For statement (ii) see \cite[Theorem 5.4]{CLP17} or \cite{BDPP08}.

\begin{theorem}\label{T:Gibbsexist}
Assume that $f:M\to M$ is a $C^{1+\alpha}$ diffeomorphism and $\Lambda\subseteq M$ is a partially hyperbolic attractor. 
\begin{enumerate}
\item[(i)] There is a Gibbs u-measure $\mu$ on $\Lambda$ with uniformly bounded and H\"older continuous conditional densities $d\mu_V/dm_V$, that is, $\mu\in \mathcal{M}^\text{ac}_{bd}$. Any weak limit of $(\mu_n)_n$ as in (\ref{G:measure}) has these properties. 
\item[(ii)] If for every $z\in\Lambda$ the orbit of the global (strongly) unstable manifold $W^u(z)$ is
dense in $\Lambda$, then every Gibbs u-measure $\mu$ has support $\supp\mu=\Lambda$.
\end{enumerate}
\end{theorem}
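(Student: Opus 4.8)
The plan is to obtain (i) by a Krylov--Bogolyubov averaging argument combined with the classical bounded distortion estimates along unstable manifolds, and to deduce (ii) from the $f$-invariance of the support together with the leafwise absolute continuity. First I would fix $z\in\Lambda$, normalize $m_{V^u(z)}$ to a probability measure on the local unstable manifold $V^u(z)$, and form the averages $(\mu_n)_n$ of (\ref{G:measure}). Since $M$ is compact, these live in the weak-$*$ compact space of Borel probability measures on $\overline U$, so along a subsequence $\mu_{n_j}\rightharpoonup\mu$. That any weak limit is $f$-invariant follows from the telescoping identity $f_*\mu_n-\mu_n=\tfrac1n\bigl(f_*^n m_{V^u(z)}-m_{V^u(z)}\bigr)$, whose right-hand side tends to $0$ weakly. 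Because $\overline{f(U)}\subseteq U$ and $\Lambda=\bigcap_{n\ge0}f^n(U)$, the iterates $f_*^i m_{V^u(z)}$ concentrate on arbitrarily small neighborhoods of $\Lambda$, so $\supp\mu\subseteq\Lambda$.

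The heart of the argument is the regularity of the conditional densities. Since $f$ is $C^{1+\alpha}$ and the splitting $E^s\oplus E^c\oplus E^u$ is continuous and $df$-invariant, the unstable Jacobian $z\mapsto\operatorname{Jac}^u f(z):=\bigl|\det\!\bigl(d_zf|_{E^u(z)}\bigr)\bigr|$ is H\"older continuous and bounded away from $0$ and $\infty$. I would push $m_{V^u(z)}$ forward and observe that its density on the image leaf is governed by the inverse products $\prod_{k=1}^{n}\operatorname{Jac}^u f\bigl(f^{-k}(\cdot)\bigr)^{-1}$. The key estimate is the bounded distortion lemma: for two points $\zeta,\eta$ on the same unstable manifold, the ratio of these products is controlled by $\sum_{k\ge1}\bigl|\log\operatorname{Jac}^u f(f^{-k}\zeta)-\log\operatorname{Jac}^u f(f^{-k}\eta)\bigr|$, which converges because backward iteration contracts unstable distances exponentially ($\lambda_3>1$) while $\log\operatorname{Jac}^u f$ is $\alpha$-H\"older. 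This yields densities that are uniformly bounded, uniformly bounded away from zero, and uniformly H\"older along leaves, and these uniform bounds are inherited by the disintegration (\ref{E:disintegrationPH}) of the weak limit $\mu$; hence $\mu\in\mathcal{M}^\text{ac}_{bd}$.

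The main obstacle is precisely this last transfer step: the $\mu_n$ converge only weakly as measures on $\Lambda$, whereas the conclusion concerns their \emph{leafwise} disintegrations, so one must argue that the uniform distortion bounds along leaves survive in the limit. I expect this to require the absolute continuity of the stable holonomies and a careful comparison of the partitions $\mathcal{P}_{\mathcal{R}(x,\delta)}$ for $\mu_n$ and for $\mu$, rather than any soft weak-convergence argument; this is the technical core drawn from \cite[Theorem 4]{PesinSinai} and \cite{CLP17}.

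For (ii) the argument is comparatively soft, and I would proceed as follows. Because $\mu$ is a Gibbs u-measure, its conditional measures $\mu_V$ are equivalent to $m_V$ and therefore have full support in the leaf topology, so for $\mu$-a.e. $V=V^u(z)$ one has $V\subseteq\supp\mu$. Since $\mu$ is $f$-invariant, $\supp\mu$ is closed and $f(\supp\mu)=\supp\mu$. I would then invoke the standard structural fact that the support of a Gibbs u-measure is saturated by global unstable manifolds---a consequence of the leafwise full support of the conditionals, the invariance of $\supp\mu$, and the continuous dependence of $V^u(\cdot)$ on its base point combined with (\ref{E:localtoglobal})---so that $W^u(z)\subseteq\supp\mu$ for every $z\in\supp\mu$. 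Finally, picking any $z\in\supp\mu\subseteq\Lambda$ and using the hypothesis that the orbit of $W^u(z)$ is dense in $\Lambda$, the closedness of $\supp\mu$ forces $\Lambda\subseteq\supp\mu\subseteq\Lambda$, i.e. $\supp\mu=\Lambda$.
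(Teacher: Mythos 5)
A point of orientation first: the paper does not prove Theorem \ref{T:Gibbsexist} at all. It is quoted from \cite[Theorem 4]{PesinSinai}, with \cite[Theorem 5.4]{CLP17} and \cite{BDPP08} cited for part (ii), so there is no internal proof to compare against; your attempt has to be measured against that literature, and measured that way it is a faithful reconstruction of the standard argument. For (i), the Krylov--Bogolyubov telescoping identity and the bounded distortion estimate for the unstable Jacobian (H\"older continuity of $\log\det(d_zf|_{E^u(z)})$ plus exponential backward contraction along unstable leaves) are exactly the ingredients of Pesin--Sinai. One small simplification you missed: by (\ref{E:partiallyglobal}) the leaf $V^u(z)$ and all its forward images already lie in the invariant set $\Lambda$, so every $\mu_n$ in (\ref{G:measure}) is supported on $\Lambda$ and no concentration argument is needed. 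You also correctly isolate the genuine technical core, namely that weak-$*$ convergence says nothing by itself about leafwise disintegrations, and you defer that step to the cited sources; since the paper defers the entire theorem to the same sources, this black box is acceptable here. However, your guess about the mechanism is off: the persistence of the density bounds in the limit does not rest on absolute continuity of the stable holonomies (that tool enters for ergodicity and uniqueness questions, not for existence). What one actually shows is that, on a fixed rectangle (\ref{E:rectanglePH}), the class of measures whose conditionals on the partition $\mathcal{P}_{\mathcal{R}(x,\delta)}$ have densities bounded above and below by fixed constants and with a fixed H\"older seminorm is itself weak-$*$ closed; this uses the continuous dependence of the local leaves $V^u(\cdot)$ on the base point and an Arzel\`a--Ascoli compactness argument on the densities, see \cite[Section 5]{CLP17}.

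For (ii) your argument is the standard one and is essentially complete: equivalence $\mu_V\sim m_V$ gives $V\subseteq\supp\mu$ for $\hat{\mu}$-a.e.\ leaf $V$ of every positive-measure rectangle; continuity of $z\mapsto V^u(z)$, the uniform size of local leaves and the closedness of $\supp\mu$ upgrade this to $V^u(z)\subseteq\supp\mu$ for \emph{every} $z\in\supp\mu$; then $f$-invariance of $\supp\mu$ together with (\ref{E:localtoglobal}) yields $W^u(z)\subseteq\supp\mu$ (apply the inclusion at $f^{-n}(z)\in\supp\mu$ and push forward), i.e.\ $\supp\mu$ is $u$-saturated; finally the density hypothesis applied to any single $z\in\supp\mu$ forces $\Lambda\subseteq\supp\mu$. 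This is precisely the argument behind \cite[Theorem 5.4]{CLP17} and \cite{BDPP08}.
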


Theorem \ref{T:Gibbsexist} makes Theorem \ref{T:closable} and its consequences applicable.

\begin{corollary}\label{C:mainresults}
Let $f$ and $\Lambda$ be as in Theorem \ref{T:Gibbsexist} and let $\mu\in \mathcal{M}^\text{ac}_{bd}$ be a Gibbs u-measure on $\Lambda$ with uniformly bounded densities.
\begin{enumerate}
\item[(i)] The quadratic forms $(\mathcal{E}^{(\mu)},\overline{\mathcal{D}}(\mathcal{E}^{(\mu)}))$ and $(\mathcal{E}^{(\mu)},\mathcal{D}(\mathcal{E}^{(\mu)}))$ as in Theorem \ref{T:closable} are local Dirichlet forms on $L^2(\Lambda,\mu)$,
their generators are self-adjoint operators on $L^2(\Lambda,\mu)$, and their semigroups are symmetric.
\item[(ii)] If for every $z\in\Lambda$ the orbit of the global (strongly) unstable manifold $W^u(z)$ is
dense in $\Lambda$, then $(\mathcal{E}^{(\mu)},\mathcal{D}(\mathcal{E}^{(\mu)}))$ is regular and strongly local, and there is an associated $\mu$-symmetric Hunt diffusion process as in Theorem \ref{T:process}.
\end{enumerate}
\end{corollary}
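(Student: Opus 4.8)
The plan is to verify that a partially hyperbolic attractor $\Lambda$ equipped with a Gibbs u-measure $\mu$ is an instance of the abstract framework of Section \ref{S:Dirichlet}, and then to read off the conclusions directly from Theorem \ref{T:closable}, Corollary \ref{C:semigroup} and Theorem \ref{T:process}. The translation dictionary is the heart of the argument. Since $\Lambda$ is a topological attractor it is compact, so I would set $D^-=\Lambda$ and take the constant exhaustion $D_\ell^-=\Lambda$ for every $\ell$; this trivializes the increasing-union aspect of the setup and leaves only the structural verification. For each $z\in\Lambda$ I would take the immersed submanifold $W(z)$ to be the global unstable manifold $W^u(z)$, which by the discussion following (\ref{E:localtoglobal}) is a $C^r$-immersed submanifold of a neighborhood $U$ of $\Lambda$ containing $z$; by the disjointness of the decomposition (\ref{E:partiallyglobal}) this assignment is consistent leaf by leaf.

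Next I would supply the rectangles together with their partitions. Using the compactness of $\Lambda$, the open cover $\{\mathcal{R}(x,\delta)\}$ by the sets (\ref{E:rectanglePH}), with $\delta$ small enough that each $\mathcal{R}(x,\delta)$ carries the measurable partition $\mathcal{P}_{\mathcal{R}(x,\delta)}$ into local unstable manifolds, admits a finite subcover $\mathcal{R}(x_1,\delta_1),\dots,\mathcal{R}(x_n,\delta_n)$. These play the role of the rectangles $\mathcal{R}_{\ell,i}$, and the partition elements $B=V^u(\zeta)$ are open subsets of the corresponding global unstable manifolds $W(\zeta)=W^u(\zeta)$, since $V^u(\zeta)$ is the $n=0$ piece of the increasing union (\ref{E:localtoglobal}). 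With this identification the disintegration identity (\ref{E:disintegrationPH}) of the Gibbs u-measure is literally the identity (\ref{E:disintegration}). As $\mu$ is assumed to lie in $\mathcal{M}^\text{ac}_{bd}$, the (AC)-property with conditional densities uniformly bounded and bounded away from zero holds by hypothesis, so all assumptions of Section \ref{S:Dirichlet} are met.

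Part (i) then follows at once: Theorem \ref{T:closable} yields that $(\mathcal{E}^{(\mu)},\overline{\mathcal{D}}(\mathcal{E}^{(\mu)}))$ and $(\mathcal{E}^{(\mu)},\mathcal{D}(\mathcal{E}^{(\mu)}))$ are local conservative Dirichlet forms on $L^2(\Lambda,\mu)$ whose generators are non-positive self-adjoint operators, and Corollary \ref{C:semigroup} produces the associated symmetric Markov semigroups. For part (ii) I would invoke Theorem \ref{T:Gibbsexist}(ii): the assumed density in $\Lambda$ of the orbit of every global unstable manifold $W^u(z)$ forces $\supp\mu=\Lambda$. Since $\Lambda$ is compact, hence closed, all hypotheses of Theorem \ref{T:process} are satisfied, giving the regularity and strong locality of $(\mathcal{E}^{(\mu)},\mathcal{D}(\mathcal{E}^{(\mu)}))$ together with the associated $\mu$-symmetric Hunt diffusion process.

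Because the corollary is purely a specialization of the abstract results, I do not expect any single deep analytic step; the entire content is the checking that the concrete dynamical objects meet the abstract hypotheses. The point requiring the most care is the construction of the finite rectangle cover together with the verification that each local unstable manifold $V^u(\zeta)$ is genuinely an open subset of the immersed global unstable manifold $W^u(\zeta)$ serving as $W(z)$, so that both the partition-into-open-subsets requirement of Section \ref{S:Dirichlet} and the exact matching of the disintegrations (\ref{E:disintegrationPH}) and (\ref{E:disintegration}) hold without modification.
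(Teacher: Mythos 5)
Your proposal is correct and follows essentially the same route as the paper's own proof: set $D_\ell^-=\Lambda$ for all $\ell$, take $W(z)=W^u(z)$, extract a finite cover by rectangles $\mathcal{R}(x,\delta)$ using compactness, note that the partition elements $V^u(z)$ are open subsets of the global unstable manifolds so that (\ref{E:disintegrationPH}) becomes (\ref{E:disintegration}), and then cite Theorem \ref{T:closable}, Corollary \ref{C:semigroup}, and (for part (ii), via Theorem \ref{T:Gibbsexist}(ii) and closedness of $\Lambda$) Theorem \ref{T:process}. Your treatment of part (ii) is in fact slightly more explicit than the paper's, which leaves that step implicit.
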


\begin{proof}
Setting $D_\ell^-:=\Lambda$, $\ell\geq 1$, we have $D^-=\Lambda$ in (\ref{E:Dminus}). From (\ref{E:rectanglePH}) and the compactness of $\Lambda$ it follows that $\Lambda$ admits a cover by finitely many rectangles $\mathcal{R}(x,\delta)$, each of which is partitioned into local unstable manifolds $V=V(z)$ that are open subsets of the global unstable manifolds $W^u(z)$. Since $\mu\in \mathcal{M}_{bd}^{ac}$ satisfies (\ref{E:disintegrationPH}), we see that (\ref{E:disintegration}) holds. 
\end{proof}

We briefly recall well-known examples for partially hyperbolic diffeomorphisms.

\begin{examples}
One class of examples of partially hyperbolic attractors is generated by \emph{direct products} $f\times g:M\times N\rightarrow M\times N$, where $f:M\rightarrow M$ is a partially hyperbolic diffeomorphism and $g:N\rightarrow N$ is a diffeomorphism whose dynamical behaviour is less sharp than that of $f$ in the sense of \cite[Section 2.3, Example 3]{Pesin} or \cite[Section 2.7]{FAR07}. Then the direct product $F=f\times g:M\times N\to M\times N$ defined by $F(x,y):=(f(x),g(y))$ is a partially hyperbolic diffeomorphism. A particularly simple case arises if $f$ is an Anosov diffeomorphism and $g$ is the identity.
\end{examples}

\begin{examples}
Suppose $f:M\rightarrow M$ is an Anosov diffeomorphism, $G$ is a (compact) Lie group $G$ and $\theta:M\rightarrow G$ is a smooth function. The \emph{skew product} $F_\theta:M\times G\rightarrow M\times G$ is defined by $F_\theta(x,y):=(f(x),\theta(x)y)$, $x\in M$, $y\in G$, it is a partially hyperbolic diffeomorphism. See for instance \cite[Section 2.3, Examples 4 and 5]{Pesin} or \cite[Section 2.9]{FAR07}.
\end{examples}

\begin{examples}
Let $\phi:\mathbb{R}\times M\rightarrow M$ be a flow (generated by a given vector field). By \textit{time-$t$ map} we mean the diffeomorphism $\phi(t,\cdot) : M \rightarrow M$. The flow $\phi$ is said to be \emph{partially hyperbolic} if its time-$1$ map $\phi(1,\cdot)$ is a partially hyperbolic diffeomorphism. A \emph{uniformly hyperbolic} (or \emph{Anosov}) \emph{flow} is a partially hyperbolic flow with one-dimensional central subspace $E^c(x)=\text{span} \lbrace{\frac{\partial}{\partial t}\vert}_{t=0}\, \phi(t,x)\rbrace$, $x\in M$. See for instance \cite[Definition 17.4.2]{KatokHasselblatt}. From any Anosov diffeomorphism of a compact Riemannian manifold $M$ one can construct Anosov flows called \emph{suspension flows}, see \cite[Section 1.11]{BrinStuck}, \cite[Section 0.3]{KatokHasselblatt} or \cite[p. 8]{Pesin}. If $\phi$ is a $C^{r+\alpha}$ Anosov flow, then the local (strongly) stable and unstable manifolds (\ref{E:localmfds}) can also be obtained from a continuous time-version of the Stable Manifold Theorem for flows \cite[Theorem 17.4.3]{KatokHasselblatt}, \cite[Section 7.3.5]{BarreiraPesin}, and also in (\ref{E:localtoglobal}) a positive real index can be used in place of $n$ and $\phi(t,\cdot)$ can replace $f^n$.
\end{examples}

A particular class of examples of Anosov flows is formed by geodesic flows on manifolds of negative sectional curvature. Since for these examples there is an established leafwise analysis to which the theory in Sections \ref{S:Dirichlet} and \ref{S:regularity} can be compared, we discuss them in a slightly more detailed manner in the next section.

\section{Geodesic flows on manifolds with negative curvature}\label{S:geodesicflow}
 
Let $M$ be a compact $C^r$ manifold endowed with a $C^r$ Riemannian metric, $r\geq2$. Given $x\in M$ and $v\in T_xM$, there is a unique geodesic $\gamma_{x,v}$ such that $\gamma_{x,v}(0)=x$ and $\dot{\gamma}_{x,v}(0)=v$. By the \emph{geodesic flow} on $M$ we mean the flow $g:\mathbb{R}\times TM\rightarrow TM$ on the tangent bundle $TM$, defined by
\begin{equation}\label{E:geoflow}
g(t,(x,v)):=(\gamma_{x,v}(t),\dot{\gamma}_{x,v}(t)).
\end{equation}
Since the length of the tangent vectors are preserved by the geodesic flow, i.e. $\Vert \dot{\gamma}_{x,v}(t)\Vert_{T_{\gamma_{x,v}(t)}M}=\Vert v \Vert_{T_xM}$, it is usual to consider the restriction of $g$ to the unit tangent bundle
\[T^1M=\lbrace (x,w)\in TM:\, \Vert w\Vert_{T_xM}=1\rbrace.\]
We assume that $M$ has negative sectional curvature. Then the $C^{r-1}$ geodesic flow $g:\mathbb{R}\times T^1M\rightarrow T^1M$ is an Anosov flow, \cite{Anosov67}, see for instance \cite[Theorem 9.4.1]{BG05} and \cite[Sections 17.5 and 17.6]{KatokHasselblatt}.

\begin{examples}
As a guiding 'example' consider the Poincar\'e half plane, that is, $\mathbb{H}=\{z\in\mathbb{C}:\ \im z>0\}$, endowed with the Riemannian metric 
\[\left\langle v,w\right\rangle_{T_z\mathbb{H}}=(\im z)^{-2}\left\langle v,w\right\rangle,\quad v,w\in T_z\mathbb{H}=\mathbb{C},\] where $\left\langle\cdot,\cdot\right\rangle$ is the usual inner product on $\mathbb{C}\cong \mathbb{R}^2$. The manifold $\mathbb{H}$ itself is not compact, so it does not fully fit our assumptions, but it is the universal cover of any closed hyperbolic surface. It is compactified by adjoining the ideal boundary $\mathbb{H}(\infty):=\{z\in\mathbb{C}: \im z=0\}$. 
Its unit tangent bunde $T^1\mathbb{H}=\{(z,v)\in \mathbb{H}\times\mathbb{C}: \left\|v\right\|_{T_z\mathbb{H}}=1\}$ is isomorphic to $PSL_2(\mathbb{R})$, and there is a related simple explicit description of the geodesic flow (\ref{E:geoflow}), see \cite[Section 1.2]{BarreiraPesin} or \cite[Sections 9.1 and 9.2]{EinsiedlerWard}. Geodesics are vertical lines or half-circles centered at points on $\mathbb{H}(\infty)$. Given $(z,v)\in T^1\mathbb{H}$, let $\gamma_{z,v}^-:=\lim_{t\to -\infty} \gamma_{z,v}(t)$, where $\gamma_{z,v}$ is the geodesic through $z$ in direction $v$. The circle $c(z,v)$ through $z$ and tangent to $\mathbb{H}(\infty)$ at $\gamma_{z,v}^-$ is called the \emph{horocycle} through $z$, and the global strong unstable manifold $W^{u}((z,v))$ through $(z,v)\in T^1\mathbb{H}$ is formed by all $(z',v')\in T^1\mathbb{H}$ such that $z'\in c(z,v)$ and $v'$ is the outer normal on $c(z,v)$ at $z'$. Further details may also be found in \cite[p. 119]{FH19} or \cite[Chapter 3]{H17}.
\end{examples}

Since the time-$1$ map $g(1,\cdot):T^1M\to T^1M$ of the geodesic flow (\ref{E:geoflow}) is a partially hyperbolic diffeomorphism, Theorem \ref{T:Gibbsexist} ensures the existence of a Gibbs u-measure $\mu\in \mathcal{M}^\text{ac}_{bd}$ on $T^1M$. On the other hand the \emph{Liouville measure} $\nu$ on $T^1M$, defined by
\begin{equation}\label{E:Liouville}
\int_{T^1M}h\,d\nu=\int_M\int_{T^1_xM}h(x,v)\,dm_{\mathbb{S}^{n-1}}\,dm_M(x),\quad h\in C(T^1M),
\end{equation}
where $m_M$ and $m_{\mathbb{S}^{n-1}}$ denote the Riemannian volume on $M$ and on the sphere $\mathbb{S}^{n-1}\equiv T^1_xM$, respectively, is invariant under $g(1,\cdot)$, see for instance \cite[Appendix C]{Poll93}. But this implies that $\mu=\nu$, \cite[Theorem 7.4.14]{FH19}, and using (\ref{E:Liouville}) it follows that $\supp\mu=T^1M$. It is well known that in the special case of a compact surface of constant negative curvature, this measure also coincides with the Bowen-Margulis measure, i.e. the measure of maximal entropy of the flow, see \cite[Chapter 11, Example 2]{PP90} or \cite{L90}.

By Corollary \ref{C:mainresults} the forms $(\mathcal{E}^{(\mu)},\overline{\mathcal{D}}(\mathcal{E}^{(\mu)}))$ and $(\mathcal{E}^{(\mu)},\mathcal{D}(\mathcal{E}^{(\mu)}))$ are local respectively strongly local and regular Dirichlet forms on $L^2(T^1M,\mu)$, and in Section \ref{S:Dirichlet} we suggested to regard their generators as generalized Laplacians. On the other hand, there is an established analysis involving leafwise Laplacians in the unstable directions, see for instance \cite{Yue, Yue95}. We briefly compare $(\overline{\mathcal{L}}^{(\mu)},\mathcal{D}(\overline{\mathcal{L}}^{(\mu)}))$ to the Laplacians studied in \cite{Yue, Yue95}.

For any rectangle $\mathcal{R}(x,\delta)$ and any local unstable manifold $V$ as it appears in the partition (\ref{E:rectanglePH}) of this rectangle, let $\varrho_V=d\mu_V/dm_V$ be the Radon-Nikodym density of the conditional measures $\mu_V$ of $\mu$ with respect to $m_V$ on $V$. Assume that $M$ and $f$ are of class $C^\infty$. Then each $\varrho_V$ is a $C^\infty(V)$-function. This $C^\infty$-regularity was proved in \cite[Lemma 2.1]{Yue}, which itself was based on \cite[Lemma 2.5]{LMM86}. Setting $\varrho(z):=\varrho_V(z)$ if $z$ is a point in the partition element $V$, we can define $\varrho$ as a $C^{\infty,u}$-function on $\mathcal{R}(x,\delta)$. On the other hand, a 'classical' leafwise Laplacian in the unstable directions can be defined by setting
\[\Delta \varphi(z):=\Delta_{W^u(z)}(\varphi|_{W^u(z)})(z),\quad z\in M,\]
for any $\varphi\in C^{2,u}(M)$. Here $\Delta_{W^u(z)}$ denotes the classical Laplace-Beltrami operator on the Riemannian manifold $W^u(z)$, defined on $C^2$-functions. Now recall that $\nabla$ defines the leafwise gradient on $C^{1,u}(M)$-functions as defined in (\ref{E:gradient}) (with $D^-=M$ and $W(z)=W^u(z)$). For functions $\varphi\in C^{2,u}(M)$ we can define 
\begin{equation}\label{E:Laplacian}
\Delta_\mu\varphi:=\Delta\varphi+\varrho^{-1}\langle\nabla \varrho,\nabla \varphi\rangle,\quad z\in \mathcal{R}(x,\delta),
\end{equation}
where we write the shortcut $\langle\nabla \varrho,\nabla \varphi\rangle$ for the function $z\mapsto\langle\nabla \varrho (z),\nabla \varphi (z)\rangle_{T_zW^u(z)}$. Using a smooth partition of unity, definition (\ref{E:Laplacian}) can meaningfully be extended to hold for all $z\in M$. The Laplacian $\Delta_\mu\varphi$ may be seen as a leafwise analog of the Laplacian on weighted manifolds, \cite[Definition 3.17]{Grigoryanbook}. It had already been studied in \cite{Yue}, see for instance \cite[Theorem 1']{Yue}, where $\mu$ had been shown to be an invariant measure for $\Delta_\mu$. This is fully consistent with our results in the sense that 
\begin{equation}\label{E:saext}
\text{$(\overline{\mathcal{L}}^{(\mu)},\mathcal{D}(\overline{\mathcal{L}}^{(\mu)}))$ is a self-adjoint extension of $(\Delta_\mu, C^{2,u}(M))$ on $L^2(T^1M,\mu)$, }
\end{equation}
note that we had independently observed the invariance of $\mu$ in (\ref{E:harmonic}).

\begin{remark}\label{R:orientation1}
Observation (\ref{E:saext}) means that for geodesic flows of class $C^\infty$ the theory in this article is simply an $L^2$-version of the smooth theory for the operator $\Delta_\mu$ as considered in \cite{Yue, Yue95}. A similar $L^2$-theory for the stable directions, including self-adjoint Laplacians, is studied extensively in \cite{Hamenstaedt97}.
\end{remark}

\begin{remark}\label{R:orientation2}
We chose the specific example of $C^\infty$-geodesic flows because it is widely known and because for this situation leafwise Laplacians of form (\ref{E:Laplacian}) had been studied in \cite{Yue}. A definition of leafwise Laplacians as in (\ref{E:Laplacian}) is possible whenever the unstable manifolds are $C^2$ and the conditional densities $C^1$, and this can be guaranteed also for certain more general classes of diffeomorphisms $f:M\to M$, see for instance \cite[Remark on p. 534]{LedrappierYoung1} or \cite[p. 168]{Yue95}.

For general partially hyperbolic attractors $\Lambda\subseteq M$ a definition of Laplacians $\Delta_\mu$ as in (\ref{E:Laplacian}) seems out of reach: The only regularity information for the conditional densities $\varrho_V$ is their H\"older continuity, too little to give a meaning to (\ref{E:Laplacian}). However, Theorem \ref{T:closable} ensures the existence of self-adjoint Laplacians as generators of quadratic forms, and their definition (\ref{E:defquadform}) and closedness posed no problem. The situation is very similar to the theory of weak solutions in partial differential equations: Divergence form elliptic second order differential operators with bounded measurable coefficients cannot be defined directly as classical operators on a space of $C^2$-functions, but 
are easily defined as the generators of corresponding quadratic forms. See for instance \cite[Chapter 6]{Evans}.
\end{remark}

\section{Hyperbolic attractors with singularities}\label{S:GH}

We consider a more general class of hyperbolic attractors induced by maps with discontinuities, it had been studied in \cite{Pesin92}. A short exposition may also be found in \cite[Section 8]{CLP17}. The notation in this section follows \cite{Pesin92}, up to minor details.

Let $M$ be a smooth Riemannian manifold. Let $U\subseteq M$ be a relatively compact open set and $N\subset U$ a closed subset. Let $f:U\setminus N\rightarrow U$ be a $C^{r+\alpha}$-diffeomorphism onto its image. We define 
\[N^+:=N\cup \partial U\]
and 
\[N^-:=\lbrace y\in U: \text{there are $z\in N^+$ and $z_n\in U\setminus N^+$ with $z_n\to z$ and $f(z_n)\to y$}\rbrace\]
and assume that $f$ is such that 
\begin{align}
\left\|d_z^2f \right\|&\leq c_1\,d(z, N^+)^{-\alpha_1} \qquad \text{for any $z\in U\setminus N$},\notag\\
\left\| d_z^2f^{-1}\right\|&\leq c_2\,d(z, N^-)^{-\alpha_2} \qquad \text{for any $z\in f(U\setminus N)$,}\notag
\end{align}
where $c_i>0$, $\alpha_i\geq0$, $i=1,2$, and $\left\|\cdot\right\|$ denotes the operator norm. A \emph{topological attractor with singularities} for $f$ is defined to be the set $\Lambda:=\overline{D}$ where
\begin{equation}\label{attractor}
D:=\bigcap_{n\geq0} f^n(U^+)\quad \text{and}\quad U^+:=\lbrace x\in U: f^n(x)\notin N^+,  n=0, 1, 2,\dots\rbrace.
\end{equation}

Given $z\in M$, $a>0$ and a subspace $P(z)\subseteq T_zM$, the \textit{cone} at $z$ around $P(z)$ with angle $\theta$ is the set $C(z, P(z), \theta):=\lbrace v\in T_zM: \measuredangle(v, P(z))\leq \theta\rbrace$. Here we write $\measuredangle(v, P):=\min_{w\in P} \measuredangle(v,w)$ for any $P\subseteq T_zM$, and we define $\measuredangle(P', P)$ for $P,P'\subseteq T_zM$ in a similar manner. 

A topological attractor with singularities $\Lambda$ is said to be a \textit{uniformly hyperbolic attractor with singularities} (or \emph{generalized hyperbolic attractor}) if there exist constants $c>0$, $\lambda \in (0,1)$, and $\theta(z)>0$ , $z\in U\setminus N^+$, together with subspaces $P^s(z),P^u(z)\subseteq T_zM$, $z\in U\setminus N^+$, of complementary dimension, such that the two families of \emph{stable} and \emph{unstable cones} 
\[C^s(z)=C^s(z, P^s(z), \theta(z))\quad \text{and}\quad C^u(z)=C^u(z, P^u(z), \theta(z))\] 
satisfy the following conditions:
\begin{enumerate}
\item[(i)] the angles $\measuredangle(C^s(z),C^u(z))$, $z\in U\setminus N^+$, are uniformly bounded away from zero, 
\item[(ii)] we have $df(C^u(z))\subseteq C^u(f(z))$ for any $z\in U\setminus N^+$ and $df^{-1}(C^s(z))\subseteq C^s(f^{-1}(z))$
for any $z\in f(U\setminus N^+)$, 
\item[(iii)] for any $n\geq 1$ we have
\begin{align}
\left\|d_zf^n v\right\|_{T_{f^n(z)}M}&\geq  c\lambda^{-n} \left\| v \right\|_{T_zM} \qquad \text{for $v\in C^u(z)$ and $z\in U^+$},\notag\\
\left\| d_zf^{-n}v\right\|_{T_{f^{-n}(z)}M}&\geq c\lambda^{-n} \left\| v \right\|_{T_zM} \qquad \text{for $v\in C^s(z)$ and $z\in f^n(U^+)$}.\notag
\end{align}
\end{enumerate}
See \cite[Section 1.3]{Pesin92} or \cite[Section 8]{CLP17}. 

In the following we assume that $\Lambda$ is a uniformly hyperbolic attractor with singularities, and we continue to use  the above notation. For any $z\in D$ the subspaces 
\[E^s(z)=\bigcap_{n\geq0} df^{-n} C^s(f^n(z)) \quad \text{and} \quad E^u(z)=\bigcap_{n\geq0} df^{n} C^u(f^{-n}(z))\]
form a splitting of the tangent space $T_zM=E^s(z)\oplus E^u(z)$ such that for any $n\geq0$
\begin{align}
\left\|d_zf^n v\right\|_{T_{f^n(z)}M}&\leq c\lambda^n \left\| v \right\|_{T_zM} \quad \quad\text{for $v\in E^s(z)$};\notag\\
\left\| d_zf^{-n}v\right\|_{T_{f^{-n}(z)}M}&\leq c\lambda^{n} \left\| v \right\|_{T_zM} \qquad \text{for $v\in E^u(z)$,}\notag
\end{align}
meaning that $D$ is a uniformly hyperbolic set containd in $\Lambda$, see \cite[p. 128]{Pesin92} or \cite{Pesin77}. An adapted version of the Stable Manifold Theorem, \cite[Proposition 4]{Pesin92}, guarantees that for sufficiently small $\varepsilon>0$
local stable manifolds $V^s(z)$, $z\in D^+_\varepsilon$, and local unstable manifolds $V^u(z)$, $z\in D^-_\varepsilon$, exist,  where for any $\ell\geq 1$ we write
\begin{align*}
D^+_{\varepsilon, \ell}&:=\lbrace z\in \Lambda: d(f^n(z), N^+)\geq \ell^{-1} e^{-\varepsilon n},\, n\geq 0\rbrace,\\
D^-_{\varepsilon, \ell}&:=\lbrace z\in \Lambda: d(f^{-n}(z), N^-)\geq \ell^{-1} e^{-\varepsilon n},\, n\geq 0\rbrace
\end{align*}
and 
\[D^+_{\varepsilon}:=\bigcup_{\ell\geq 1} D^+_{\varepsilon, \ell},\qquad D^-_{\varepsilon}:=\bigcup_{\ell\geq 1} D^-_{\varepsilon, \ell}.\]
It can be shown that $V^u(z)\subseteq D^-_{\varepsilon}$ for any $z\in D^-_{\varepsilon}$, see \cite[Proposition 5]{Pesin92}. Analogously to (\ref{E:localtoglobal}), the global stable and unstable manifolds are defined as
\[W^s(z)=\bigcup_{n\geq0} {\hat{f}}^{-n}(V^s(f^n(z))),\quad z\in D^+_{\varepsilon},\]
and
\[W^u(z)=\bigcup_{n\geq0} {\hat{f}}^{n}(V^u(f^{-n}(z))),\quad z\in D^-_{\varepsilon},\]
where we write ${\hat{f}}^n(A):=f^n(A\setminus N^+)$ and ${\hat{f}}^{-n}(A):=f^{-n}(A\setminus N^-)$,  $A\subseteq \Lambda$.

Now let $\varepsilon>0$ and $\ell\geq 1$ be fixed. Given $x\in D_{\varepsilon,\ell}^-$, we write $B(z,\delta)$ to denote the open ball in $U$ centered at $z$ and with radius $\delta$, and we write $B^u(z,\delta)$ for the open ball in $W^u(z)$ with center $z$ and radius $\delta$. By \cite[Proposition 7]{Pesin92}, there are $r^{(1)}_{\ell}>r^{(2)}_{\ell}>r^{(3)}_{\ell}>0$ such that for any $x\in D_{\varepsilon,\ell}^-$ and any $z\in B(x,r^{(3)}_{\ell})\cap D^-_{\varepsilon, \ell}$ the intersection $V^u(z)\cap W(x)$ of $V^u(z)$ and $W(x):=\exp_x\lbrace v\in E^s(x):\, \Vert v\Vert\leq r^{(1)}_{\ell}\rbrace$ is precisely a single point $[z,x]$ and, moreover, $B^u([z,x],r_\ell^{(2)})\subseteq V^u(z)$. Given $x\in D_{\varepsilon,\ell}^-$ and $\delta\leq r^{(3)}_\ell$ we define the rectangle $\mathcal{R}_{\varepsilon,\ell}(x,\delta)$ by 
\begin{equation}\label{E:rectangleGH}
\mathcal{R}_{\varepsilon,\ell}(x,\delta)=\bigcup_{z\in B(x,\delta)\cap D^-_{\varepsilon, \ell}}B^u([z,x],r^{(2)}_{\ell}).
\end{equation}
Obviously (\ref{E:rectangleGH}) is a partition of $\mathcal{R}_{\varepsilon,\ell}(x,\delta)$ into the sets $B=B^u([z,x],r^{(2)}_{\ell})\subseteq V^u(z)$.

In the following we consider $\varepsilon>0$ to be fixed and therefore suppress it from notation. That is, we write 
\[D^-:=D^-_\varepsilon,\quad D^-_\ell:=D^-_{\varepsilon,\ell},\quad \mathcal{R}_{\ell}(x,\delta):=\mathcal{R}_{\varepsilon,\ell}(x,\delta)\] 
and so on, this shortcut notation follows \cite[p. 129]{Pesin92}.

We call an $f$-invariant Borel probability measure $\mu$ on $D^-$ a \textit{Gibbs u-measure} if for any $\ell\geq 1$, $x\in D_\ell^-$ and $\delta\leq r^{(3)}_\ell$ such that $\mu(\mathcal{R}_{\ell}(x,\delta))>0$ we have 
\begin{equation}\label{E:disintegrationGH}
\mu(E)=\int_{\mathcal{P}_{\mathcal{R}_\ell(x,\delta)}} \, \mu_{B}(E)\, \hat{\mu}_{\mathcal{P}_{\mathcal{R}_\ell(x,\delta)}}(dB), \quad \text{$E\subseteq \mathcal{R}_\ell(x,\delta)\ $ Borel},
\end{equation}
with conditional measures $\mu_B$ equivalent to the Riemannian volumes $m_B$ on the partition elements $B$ as in (\ref{E:rectangleGH}). 

The following existence result for Gibbs u-measures had been shown in \cite[Theorem 1]{Pesin92}.

\begin{theorem}\label{T:GibbsGH}
Let $\Lambda$ be a uniformly hyperbolic attractor with singularities for the $C^2$-diffeomorphism $f$ and assume that there are a point $z\in D^-$ and constants $c>0$, $q>0$, $\varepsilon_0>0$ such that for any $0<\varepsilon\leq \varepsilon_0$, and $n\geq0$
\begin{equation}m_{V^u(z)}(V^u(z)\cap f^{-n}(U(\varepsilon, N^+)))\leq c\:\varepsilon^q,
\end{equation}
where $U(\varepsilon, N^+)$ is the $\varepsilon$-parallel neighborhood of $N^+$ in $M$. Then there is a Gibbs u-measure $\mu\in \mathcal{M}^{ac}_{bd}$ with uniformly bounded densities on $D^-\subseteq \Lambda$, and $\mu(D^-)=1$.
\end{theorem}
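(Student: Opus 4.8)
The plan is to follow the Krylov--Bogolyubov construction of Gibbs u-measures familiar from the nonsingular case, \cite{PesinSinai, CLP17}, and to use the quantitative hypothesis on $m_{V^u(z)}(V^u(z)\cap f^{-n}(U(\varepsilon,N^+)))$ precisely to prevent loss of mass into the singularity set. First I would fix the point $z\in D^-$ supplied by the hypothesis and, starting from (a relatively compact piece of) the local unstable manifold $V^u(z)$ equipped with the normalized Riemannian volume $m_{V^u(z)}$, form the averaged pushforwards $\mu_n$ as in (\ref{G:measure}). Because $f$ is defined only off $N$, the iterate $f^i$ acts on $V^u(z)$ away from $f^{-i}(N^+)$; the hypothesis bounds the $m_{V^u(z)}$-mass of the part of the leaf landing within $\varepsilon$ of $N^+$ after $i$ steps by $c\varepsilon^q$, uniformly in $i$. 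Each $\mu_n$ is then a Borel probability measure on the compact set $\Lambda=\overline{D}$, so by Prokhorov's theorem I can extract a weak-$*$ convergent subsequence $\mu_{n_k}\to\mu$.

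The second step is to show that $\mu$ is $f$-invariant and that $\mu(D^-)=1$. Invariance would normally follow from the telescoping identity $f_*\mu_n-\mu_n=\tfrac1n\bigl(f_*^n m_{V^u(z)}-m_{V^u(z)}\bigr)\to0$, but with discontinuities this is legitimate only once one knows that $\mu$ does not charge the discontinuity set $N^+$. This is exactly what the hypothesis delivers: averaging the bound gives $\mu_n(U(\varepsilon,N^+))\le c\varepsilon^q$ for every $n$, and the portmanteau theorem applied to the open set $U(\varepsilon,N^+)$ yields $\mu(U(\varepsilon,N^+))\le c\varepsilon^q$ in the limit; letting $\varepsilon\to0$ gives $\mu(N^+)=0$, which suffices to pass $f_*\mu=\mu$ through the limit. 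The same family of estimates, fed into a Borel--Cantelli argument and combined with the $f$-invariance of $\mu$ (to transfer the forward control near $N^+$ into the backward control near $N^-$ that defines the sets $D^-_\ell$), shows that the set of points whose orbits approach the singularity set faster than the admissible exponential rate $e^{-\varepsilon n}$ is $\mu$-null. Hence $\mu$ concentrates on $\bigcup_\ell D^-_\ell=D^-$, i.e.\ $\mu(D^-)=1$.

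The heart of the matter is the Gibbs u-property: that on each rectangle $\mathcal{R}_\ell(x,\delta)$ of positive measure the disintegration (\ref{E:disintegrationGH}) holds with conditional measures $\mu_B$ equivalent to $m_B$ and densities $d\mu_B/dm_B$ bounded above and bounded away from zero. For this I would invoke bounded distortion of the unstable Jacobian $J^uf(\zeta):=\det\bigl(d_\zeta f|_{E^u(\zeta)}\bigr)$ along orbits. For two points $\zeta,\zeta'$ on the same local unstable leaf $B\subseteq V^u(z)$ the density of $f_*^n m_{V^u(z)}$ with respect to Riemannian volume on the image leaf $f^n(V^u(z))$ is $1/J^uf^n$ evaluated at the relevant preimages, so the ratio of densities is governed by the series $\sum_{i\ge1}\bigl|\log J^uf(f^{-i}\zeta)-\log J^uf(f^{-i}\zeta')\bigr|$. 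Since local unstable manifolds contract backward at the uniform rate $\lambda^i$, and $\log J^uf$ has a H\"older modulus, of some exponent $\beta\in(0,1]$, that degenerates near the singularity set only as fast as the second-derivative bound $\|d^2f^{-1}\|\le c_2\,d(\cdot,N^-)^{-\alpha_2}$ permits, the restriction to $D^-_\ell$ (where $d(f^{-i}\cdot,N^-)\ge\ell^{-1}e^{-\varepsilon i}$) makes each summand of order $\ell^{\alpha_2}e^{\varepsilon\alpha_2 i}\lambda^{\beta i}$. For $\varepsilon$ chosen small relative to the contraction exponent (so that $\varepsilon\alpha_2<\beta\log(1/\lambda)$) this series converges uniformly, giving distortion bounds independent of $n$. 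These bounds are inherited by the averages $\mu_n$ and survive the weak limit, so the conditionals of $\mu$ are equivalent to $m_B$ with densities bounded above and below; that is, $\mu\in\mathcal{M}^{\text{ac}}_{bd}$ and (\ref{E:disintegrationGH}) holds.

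I expect the main obstacle to be exactly this distortion control in the presence of singularities, where two competing effects must be balanced for a single small $\varepsilon$: the admissible approach rate $e^{-\varepsilon n}$ to the singularity set must be slow enough that the divergent factors $d(f^{-i}\cdot,N^-)^{-\alpha_2}$ are dominated by the unstable contraction $\lambda^i$ (so that the distortion series converges), yet the set $\bigcup_\ell D^-_\ell$ of points obeying this rate must still carry full $\mu$-mass (so that the construction is not vacuous). Reconciling these two requirements is the delicate quantitative step, and it is precisely where the hypothesis $m_{V^u(z)}(V^u(z)\cap f^{-n}(U(\varepsilon,N^+)))\le c\varepsilon^q$ enters; once it is in place, the equivalence $\mu_B\sim m_B$, the disintegration (\ref{E:disintegrationGH}) and the uniform density bounds follow as in the nonsingular construction of \cite{PesinSinai}, while $\mu(D^-)=1$ is read off from $\mu(U(\varepsilon,N^+))\le c\varepsilon^q$.
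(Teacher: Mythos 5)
The first thing to say is that the paper contains no proof of Theorem \ref{T:GibbsGH} to compare against: it is imported verbatim from \cite[Theorem 1]{Pesin92} (the paper says so immediately before the statement), and Section \ref{S:GH} only develops the notation ($D^-_{\varepsilon,\ell}$, the rectangles (\ref{E:rectangleGH}), the disintegration (\ref{E:disintegrationGH})) needed to state it and to feed it into Corollary \ref{C:mainresultsGH}. So the relevant benchmark is Pesin's original argument, and your outline reconstructs exactly that strategy: push forward the normalized leaf volume on $V^u(z)$ for the distinguished point $z$, average as in (\ref{G:measure}), extract a weak-$*$ limit on the compact set $\Lambda$; use the hypothesis plus the portmanteau inequality for the open sets $U(\varepsilon,N^+)$ to get $\mu(U(\varepsilon,N^+))\le c\varepsilon^q$, hence $\mu(N^+)=0$, which is also what makes each $\mu_n$ well defined and lets the invariance identity pass to the limit despite the discontinuities; then bounded distortion of the unstable Jacobian along backward orbits for the conditional density bounds. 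This is the same approach as the cited proof.

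There is, however, one step where the mechanism you name would not survive scrutiny as stated. You claim that $f$-invariance of $\mu$ "transfers the forward control near $N^+$ into the backward control near $N^-$ that defines the sets $D^-_\ell$". Invariance does give $\mu\lbrace z: d(f^{-n}z,N^-)<\delta\rbrace=\mu(U(\delta,N^-))$, but your hypothesis-derived estimate bounds only $\mu(U(\varepsilon,N^+))$, a neighborhood of a \emph{different} set. To close the loop you need a quantitative, power-type comparison between $d(x,N^+)$ and $d(f(x),N^-)$, extracted from the derivative blow-up bounds; a soft compactness argument only yields a qualitative modulus $\eta(\delta)\to 0$, and that is not enough to make the Borel--Cantelli sum $\sum_n \mu(U(\ell^{-1}e^{-\varepsilon n},N^-))$ small as $\ell\to\infty$. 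The same bookkeeping issue reappears in your distortion series: the H\"older constant of $\log J^uf$ at $f^{-i}\zeta$ degenerates with the distance to $N^+$ (via the $\Vert d^2f\Vert$ bound), not only to $N^-$, and the H\"older regularity of the distribution $E^u$ itself degenerates near the singularities, so one must again convert the $N^-$-control built into $D^-_\ell$ into $N^+$-control one iterate later. These conversions are precisely the technical lemmas that occupy Pesin's proof; your proposal correctly locates them as the delicate points but asserts rather than supplies them, so as a self-contained argument it is incomplete exactly there.
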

\begin{remark}
It follows in particular that $\mu(N^+)=0$.
\end{remark}

Theorem \ref{T:GibbsGH} allows to apply Theorem \ref{T:closable} and its consequences.

\begin{corollary}\label{C:mainresultsGH}
Let $f$ and $\Lambda$ be as in Theorem \ref{T:GibbsGH} and let $\mu\in \mathcal{M}^{ac}_{bd}$ be a Gibbs u-measure with uniformly bounded densities on $D^-$. Then $(\mathcal{E}^{(\mu)},\overline{\mathcal{D}}(\mathcal{E}^{(\mu)}))$ and $(\mathcal{E}^{(\mu)},\mathcal{D}(\mathcal{E}^{(\mu)}))$ as in Theorem \ref{T:closable} are local Dirichlet forms on $L^2(D^-,\mu)$, their generators are self-adjoint operators on $L^2(D^-,\mu)$, and their semigroups are symmetric.
\end{corollary}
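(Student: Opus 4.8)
The plan is to verify that the concrete data attached to a generalized hyperbolic attractor fits the abstract framework of Section \ref{S:Dirichlet}, so that Theorem \ref{T:closable} and Corollary \ref{C:semigroup} apply verbatim, exactly as in the proof of Corollary \ref{C:mainresults}. First I would set up the dictionary between the two settings. Keeping $M$ and $U$ as given and $\varepsilon>0$ fixed by the standing convention, I take $D_\ell^-:=D_{\varepsilon,\ell}^-$ for the exhausting sequence. These sets are compact, being cut out by the closed conditions $d(f^{-n}(z),N^-)\ge \ell^{-1}e^{-\varepsilon n}$ inside the compact set $\Lambda=\overline{D}\subseteq\overline U$, and they increase with $\ell$ because the defining inequalities relax as $\ell$ grows; their union is $D^-=D_\varepsilon^-$, giving (\ref{E:Dminus}). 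For each $z\in D^-$ I put $W(z):=W^u(z)$; by the adapted Stable Manifold Theorem, \cite{Pesin92}, these global unstable manifolds are $C^r$-immersed submanifolds of $U$ through $z$.

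Next I would produce the rectangle structure. For fixed $\ell$, every point of $D_\ell^-$ lies in a rectangle $\mathcal{R}_\ell(x,\delta)$ of the form (\ref{E:rectangleGH}), so these sets cover the compact $D_\ell^-$ and admit a finite subcover $\mathcal{R}_{\ell,1},\dots,\mathcal{R}_{\ell,n_\ell}$. By construction each rectangle is partitioned into the pieces $B=B^u([z,x],r^{(2)}_\ell)$, which are open balls in $W^u(z)$ and hence the open subsets of the immersed submanifolds the framework demands. Finally the defining property (\ref{E:disintegrationGH}) of a Gibbs u-measure is exactly the disintegration identity (\ref{E:disintegration}), and since $\mu\in\mathcal{M}^{ac}_{bd}$ the conditional measures $\mu_B$ are equivalent to $m_B$ with densities uniformly bounded and uniformly bounded away from zero, which is precisely the (AC)-property in the form encoded by $\mathcal{M}^{ac}_{bd}$. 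With every standing assumption checked, Theorem \ref{T:closable} makes both forms local conservative Dirichlet forms on $L^2(D^-,\mu)$ with self-adjoint generators, and Corollary \ref{C:semigroup} supplies the symmetric Markov semigroups.

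The step demanding genuine care --- and the reason this does not reduce literally to Corollary \ref{C:mainresults} --- is that here $D^-$ is only a countable increasing union of the compact pieces $D_\ell^-$ rather than a single compact set, and in general $D^-$ is neither closed nor of full $\mu$-measure in $\Lambda$. This is exactly the situation for which the framework was phrased via an exhausting sequence, and the corresponding extra approximation step, splitting $D^-$ into $D_\ell^-$ and its complement, is already built into Lemma \ref{L:closable}, so no new analytic work is needed beyond invoking it. A related subtlety I would flag is that the unstable pieces $B^u([z,x],r^{(2)}_\ell)\subseteq V^u(z)\subseteq D^-$ constituting a rectangle need not themselves lie in $D_\ell^-$; this is harmless, since the closability argument uses only that the rectangles cover $D_\ell^-$ together with the elementary bound $\int_{D_\ell^-}\le\sum_i\int_{\mathcal{R}_{\ell,i}}$ for the nonnegative integrand. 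I would also record at the outset that $\mu(N^+)=0$, so that the singularity set carries no mass.
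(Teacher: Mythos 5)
Your proposal is correct and follows essentially the same route as the paper: identify $D^-$, $D_\ell^-$, $W(z)=W^u(z)$ and the rectangles (\ref{E:rectangleGH}) with the data of Section \ref{S:Dirichlet}, check the disintegration (\ref{E:disintegrationGH}) against (\ref{E:disintegration}) with the bounded-density (AC)-property, and invoke Theorem \ref{T:closable} and Corollary \ref{C:semigroup}. The paper's proof is exactly this verification stated more tersely; your extra observation that the rectangles of (\ref{E:rectangleGH}) need not lie inside $D_\ell^-$ but only cover it, and that the closability argument of Lemma \ref{L:closable} uses nothing more than this covering, is a subtlety the paper passes over silently, and your explanation of why it is harmless is accurate.
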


\begin{remark}
In general $D^-$ is a proper subset of $\Lambda$, and tangent spaces in the unstable directions are defined only at points of $D^-$. But since $D^-$ has full measure, we have  $L^2(\Lambda,\mu)=L^2(D^-,\mu)$, so that the Dirichlet forms constructed in Corollary \ref{C:mainresultsGH} and the associated Laplacians and semigroups may be regarded as objects on $L^2(\Lambda,\mu)$, and in that sense 'on $\Lambda$'.
\end{remark}

\begin{proof}
The present hypotheses fit into Section \ref{S:Dirichlet} with $D^-$ and $D_\ell^-$, $\ell\geq 1$, as defined here. For each $\ell\geq 1$ finitely many rectangles of type $\mathcal{R}_{\ell}(x,\delta)$ cover the compact set $D_\ell^-\subseteq \Lambda$, and each rectangle admits a partition as in (\ref{E:rectangleGH}). The measure $\mu$ satisfies the disintegration identities (\ref{E:disintegration}) in the form (\ref{E:disintegrationGH}) and with uniformly bounded conditional densities. 
\end{proof}

We provide some examples for attractors with singularities.

\begin{examples}
Let $I=(-1,1)$, $U=I\times I$ and $N=I\times\lbrace 0\rbrace$ and let $f:U\setminus N\rightarrow U$ be a map of the form $f(x,y):=(g(x,y),h(x,y))$, where $g,h$ are functions given by
\begin{align*}
g(x,y)&=(-B\vert y\vert^{\nu_0}+Bx\,\text{sgn}\,y\vert y\vert^\nu+1)\,\text{sgn}\,y,\\
h(x,y)&=((1+A)\vert y\vert^{\nu_0}-A)\,\text{sgn}\,y,
\end{align*}
for constants $0<A<1$, $0<B<\frac{1}{2}$, $\nu>1$, $1/(1+A)<\nu_0<1$. The resulting attractor is the well-known \emph{(geometric) Lorenz attractor}, it is a uniformly hyperbolic attractor with singularities. 

A more common definition of the Lorenz attractor is as the attractor (in ODE sense) for the non-linear system
\[\dot{x}=-\sigma x+\sigma y, \quad  \dot{y}=rx-y-xz \quad \text{and}\quad  \dot{z}=xy-bz \]
for the particular parameters $\sigma=10$, $b=\frac{8}{3}$ and $r=28$, illustrated in Figure 1. 

Further details and more general classes of Lorenz attractors are discussed in \cite[Section 5.2]{Pesin92}, see also \cite[Section 13.3]{HaPe03} and \cite[Section 2.2]{Kuznetsov}.

\begin{figure}
\centering
\includegraphics[height=5cm]{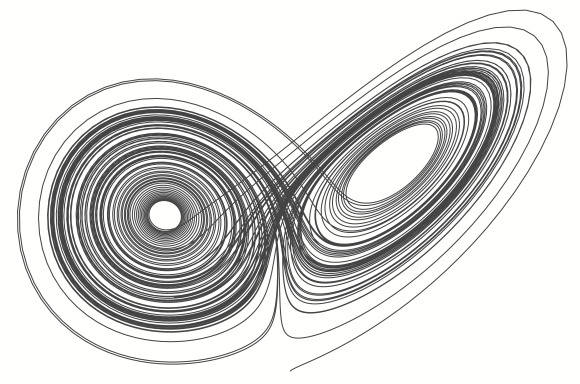}
\caption{Lorenz attractor}
\end{figure}
\end{examples}

\begin{examples}
Given $c\in (0,1)$,  let $I=(-c,c)$, $U=I\times I$ and $N=\lbrace0\rbrace\times I$. Define $f:U\setminus N\rightarrow U$ by $f(x,y)=(1+by-a\vert x\vert,x)$, where $0<a<a_0$ and $0<b<b_0$ for some small $a_0, b_0>0$. The map $f$ is called the \emph{Lozi map}, and the associated attractor is the \emph{Lozi attractor}, \cite{Lozi}. This is the special case of the so-called Lozi-like maps studied in\cite{Young, Pesin92}. Ergodic and topological properties can be found in \cite{Levy, Misiurewicz}.
\end{examples}

\begin{examples}
Let $U=(-1,1)\times (-1,1)$ and $N=\lbrace(x,y): y=kx\rbrace\subset U$. The map $f:U\setminus N\rightarrow U$ given by
\begin{equation*}
  f(x,y)=  \begin{cases}
      (\lambda_1(x-1)+1,\,\lambda_2(y-1)+1)\quad \text{for}\,\, y>kx, & \\
      (\mu_1(x+1)-1,\,\mu_2(y+1)-1)\quad \text{for}\,\, y<kx, & \\
    \end{cases}      
\end{equation*}
has a hyperbolic attractor with singularity set $N$ whenever $\vert k\vert<1$, $0<\lambda_1, \mu_1<\frac{1}{2}$ and $1<\lambda_2,\mu_2<\frac{2}{1-\vert k\vert}$. It is called the \emph{Belykh attractor}. Details can be found in \cite[p. 149]{Pesin92} and \cite{Sataev99}.
\end{examples}

\begin{remark}
Partially hyperbolic attractors with singularities $\Lambda=\overline{D}$ can similarly be constructed using (\ref{attractor}), with the difference that $D$ must be a partially hyperbolic set in the sense of Section \ref{S:PH}. Stable and unstable manifolds can be constructed analogously, as well as Gibbs u-measures, \cite[Theorem 12]{Pesin92}. A geometric example of such attractors is provided in \cite[Section 5.2, Example 4]{Pesin92}. See also \cite{LS82}.
\end{remark}

%

\end{document}